\newtheorem{theorem}{Theorem}[]
\newtheorem{lemma}[]{Lemma}
\newtheorem{example}[]{Example}[]
\newtheorem{definition}[]{Definition}
\newtheorem{remark}[]{Remark}
\newcommand*\E{\mathop{}\!\mathbb{E}}
\newcommand{\Exp}{\mathds{E}}
\newcommand{\ii}{\mbox{i}}
\newcommand{\Prob}{\mathop{}\!\mathbb{P}}
\newcommand{\vect}[1]{\vec{#1}}
\renewcommand{\vect}[1]{\boldsymbol{#1}}
\newcommand{\mat}[1]{\boldsymbol{#1}}
\newcommand*\dd{\mathop{}\!\mathrm{d}}
\author[H. \smash{Albrecher}]{Hansj\"org Albrecher$^1$}
\address[Hansj\"org Albrecher]{ Department of Actuarial Science, Faculty of Business and Economics and Swiss Finance Institute, University of Lausanne, CH-1015 Lausanne, Switzerland}
\email{{hansjoerg.albrecher@unil.ch}}
\author[M. \smash{Bladt}]{Martin Bladt}
\address[Martin Bladt]{Department of Actuarial Science, Faculty of Business and Economics, University of Lausanne, CH-1015 Lausanne, Switzerland}
\email{{martin.bladt@unil.ch}}
\author[M. \smash{Bladt}]{Mogens Bladt}
 \address[Mogens \smash{Bladt}]{Department of Mathematical Sciences, University of Copenhagen, Universitetsparken 5, DK-2100 Copenhagen \O, Denmark}
\email{bladt@math.ku.dk}
\title{Multivariate Matrix Mittag--Leffler distributions}
\keywords{Multivariate distribution; heavy tails; Markov process; Mittag-Leffler distribution;
phase-type; matrix distribution; extremes; Laplace transforms}
\begin{document}
\footnote{Corresponding author.}

%\tableofcontents
\begin{abstract}
We extend the construction principle of multivariate phase-type distributions to establish an analytically tractable class of heavy-tailed multivariate random variables whose marginal distributions are of Mittag-Leffler type with arbitrary index of regular variation. The construction can essentially be seen as allowing a scalar parameter to become matrix-valued. The class of distributions is shown to be dense among all multivariate positive random variables and hence provides a versatile candidate for the modelling of heavy-tailed, but tail-independent, risks in various fields of application. 
\end{abstract} 
\maketitle
\section{Introduction}
The joint modelling of dependent risks is a crucial task in many areas of applied probability and quantitative risk management, see e.g.\ \citet{McN}. While in many situations there is a reasonable amount of data available for the fitting procedure of univariate risks, the identification of {multivariate models} is much more delicate. A frequent approach proposed in applications is to use the available data for univariate fitting, and choose a parametric copula to combine the margins, where the parameters of that copula are then either assumed a priori or estimated from the joint data. The choice of such a copula is of course crucial for the resulting joint distribution and the conclusions one draws from it, cf.\ \citet{Scherer,Mikosch}. In multivariate extremes, which is currently a very active research topic, one typically uses less restrictive assumptions for the quantification of joint exceedances, see e.g.\ \citet{Falk,Kiri}. Some specific families, like multivariate regular variation, are considered particularly attractive in this context, as they have a natural interpretation in terms of how to extend univariate behaviour into higher dimensions \citet{Dombry,joe,resnick02}. These results focus, however, on the asymptotic behaviour, so that for a concrete application with an available data set one typically has to choose thres\-holds above which this respective behaviour is assumed \citet{wan}, and the bulk of the distribution {is} then to be modelled by a different distribution (see e.g.\ \citet{beirlant2006} and \cite[Ch.IV.5]{abt}).  \\

In this paper we would like to establish a family of multivariate distributions that can be applied for modeling across the entire positive orthant, so that no thres\-hold selection is needed. In particular, we are interested in a family that leads to explicit and tractable expressions for the model fitting and interpretation. While such a family already exists for marginally light (exponentially bounded) tails in the form of multivariate phase-type (MVPH) distributions, our goal here is to develop a related family with heavy-tailed marginal distributions. The univariate starting point for this procedure is the matrix Mittag-Leffler (MML) distribution, which is a heavy-tailed distribution that was recently studied in \citet{albrecher2019matrix}, and which proved to be very tractable, with excellent fitting properties. While in principle there are many possible ways of defining a vector of random variables with given marginals, we want to consider here the natural concept of multivariate families that can be characterized by the property that any linear combination of the components of such a vector is again of the same marginal type. This is exactly one possible definition of MVPH distributions (so any linear combination of the coordinates of a random vector are again (univariate) phase-type), and it is also a characterizing property of multivariate regular variation of a random vector, namely that any linear combination of the coordinates of such a vector is again (univariate) regularly varying, see \citet{basrak}. \\

The goal is hence to study the class of multivariate random vectors for which such a property applies with MML marginal distributions. It will turn out that for this approach to work, we first need to consider a slightly more general class, which we will refer to as generalized MML distributions. We will show that the analysis developed for the MVPH case can then be extended to our more general situation. In particular, we will establish some properties of this class and work out explicit expressions for a number of concrete cases. The analysis is considerably simpler for the symmetric situation where all marginal distributions share the same index of regular variation, but the general case can be handled as well. The resulting multivariate MML distribution is asymptotically independent, i.e.\ there is tail-independence for each bivariate pair of components. In the case of multivariate regular variation, the subclass of random vectors with asymptotic independence was studied and characterized in terms of second order conditions in \citet{resnick02}, where also concrete application areas for such heavy-tailed, but asymptotically independent risks are given. In a sense, the multivariate MML family of distributions we introduce here is another candidate for models in this domain, with the advantage of being explicit and tractable across the entire range ${\mathbb R}_+^n$. In that respect, this family is also an interesting alternative to multivariate Linnik distributions (see e.g.\ \citet{anderson} and \citet{lim}), which can be conveniently defined in terms of their characteristic function, have the range ${\mathbb R}^n$ (rather than ${\mathbb R}_+^n$) and also have  heavy-tailed marginals, but which do not lead to explicit expressions for the multivariate density. \\

The remainder of the paper is organized as follows. Section \ref{sec2} recapitulates the construction principle of univariate and multivariate PH distributions and provides the available background on MML distributions. Section \ref{sec:GMML} introduces generalized MML distributions. In 
Section \ref{sec4} we then develop the necessary theoretical background for our definition of the multivariate MML family and establish some of its properties. We also consider power transforms, which will provide useful flexibility for modeling applications, and we derive denseness properties of the resulting multivariate family. In Section \ref{sec:examples} we work out a concrete simple example in detail and illustrate resulting dependence properties for this case. Section \ref{sec:concl} concludes.
%--------------------------
%In this paper we introduce a multivariate theory for Mittag--Leffler (ML) type of distributions. When establishing a multivariate theory there is a number of questions which inevitably will have to be addressed such as marginal distributions and linear combinations (projections) of the coordinate vectors. Concerning the latter point, convolutions of independent ML distributions with the same tail index is not a ML distribution so projections in the different directions would no longer belong to this class. They are, however, contained in a larger class of so--called Matrix Mittag--Leffler (MML) distributions which was introduced in \citet{albrecher2019matrix}. The construction therein is based on Phase--type distributions, which makes the class both versatile and tractable. The MML class, which is closed under finite convolutions, will for the basis for defining multivariate MML with the same tail index in all coordinate directions as those for which any linear combination of the coordinate random variables have a MML. If we allow for different tail indices for the different coordinates then it will be necessary to introduce a larger class of MML distributions, referred to as generalized MML or GMML, which will suffice to describe the different projections or linear combinations of coordinate processes. 

\section{Phase--type distributions}\label{sec2}
\subsection{Notation}
We shall apply a common convention from phase--type theory that matrices are expressed in bold capital letters (e.g. $\mat{T}, \mat{\Lambda})$, row vectors are bold minuscular greek letters (e.g. $\vect{\pi}, \vect{\alpha})$ while column vectors are bold minuscular roman letters (e.g. $\vect{t}$, $\vect{x})$. Elements of matrices and vectors are denoted by their corresponding minuscular unbold letters with indices, e.g. $\mat{A}=\{ a_{ij}\}$ and 
 $\vect{a}=(a_i)$. If $\vect{a}=(a_1,...,a_n)$ is a vector, then by $\mat{\Delta}(\vect{a})$ we shall denote the diagonal matrix with $\vect{a}$ as diagonal. 
\subsection{Univariate phase--type distributions}
Phase--type distributions are defined as the distribution of the time until absorption 
of a finite state--space Markov jump process with one absorbing state and the other states being transient. 

{ Let $p$ be a positive integer, and} $\{ X_t \}_{t\geq 0}$ denote a Markov jump process on $E=\{1,...,p,p+1\}$, where states $1,2,...,p$ are transient and state $p+1$ is absorbing. Let $\pi_i = \Prob (X_0=i)$ and assume that $\pi_1+\cdots \pi_p =1$, i.e. initiation in the absorbing state is not possible. The intensity matrix of $\{ X_t\}_{t\geq 0}$ can be written as 
 \begin{equation}
 \mat{\Lambda} = 
 \begin{pmatrix}
 \mat{T} & \vect{t} \\
 \vect{0} & 0 
 \end{pmatrix} , \label{basic-full-int-mat}
 \end{equation}
  where $\mat{T}$ is the $p\times p$ {\it sub--intensity} matrix whose off diagonal elements consist of transition rates between the transient states, $\vect{t}$ is a $p$--dimensional column vector 
$\vect{0}$ is a $p$--dimensional row vector. { The diagonal elements of $\mat{T}$ are given by $t_{ii}=-\sum_{j\neq i}t_{ij} + t_i$, since the row sums of $\mat{\Lambda}$ must be zero.}

 Let $\vect{e}$ denote the vector of ones and $\vect{\pi}=(\pi_1,...,\pi_p)$. Dimensions are usually suppressed and $\vect{e}$ may then have any adequate dimension depending on the context.

 Then the time until absorption,
 \[   \tau = \inf\{ t\geq 0 : X_t=p+1 \} , \]
 is said to have a phase--type (PH) distribution with representation $(\vect{\pi},\mat{T})$ and we write $\mbox{PH}_p(\vect{\pi},\mat{T})$. Since rows of $\mat{\Lambda}$ sum to zero, we get $\vect{t}=-\mat{T}\vect{e}$. { Note that the case $p=1$ leads to an exponential distribution.}

 If $\tau \sim \mbox{PH}_p(\vect{\pi},\mat{T})$, then a number of relevant formulas can be written compactly in matrix notation, like e.g.
 \begin{eqnarray*}
 f(x;{ \vect{\pi},\mat{T}})&=&\vect{\pi}e^{\mat{T}x}\vect{t}, \quad { x>0,}\\
 F(x;{ \vect{\pi},\mat{T}})&=&1-\vect{\pi}e^{\mat{T}x}\vect{t}, \quad { x>0,}\\
 L(s;{ \vect{\pi},\mat{T}})&=&\vect{\pi}(s\mat{I}-\mat{T})^{-1}\vect{t}, \quad { s>\mbox{\rm Re}(\eta_{\rm \max}),}\\
 \Exp (\tau^\alpha)&=& \Gamma (\alpha +1)\vect{\pi}(-\mat{T})^{-\alpha}\vect{e}, \quad { \alpha>0,}
 \end{eqnarray*}
for the density, c.d.f., Laplace transform and (fractional) moments, respectively. { Here $\eta_{\rm max}$ denotes the eigenvalue with maximum real part of $\mat{T}$, and this real part is strictly negative. In particular, the Laplace transform is well defined for all $s\geq 0$ and in a neighbourhood around zero. }

\begin{remark}\rm
{ 
Representations $(\vect{\pi},\mat{T})$ of phase--type distributions are not unique. In fact, one can construct an infinite number of different representations, which may even be of different orders $p$. Hence phase--type representations may also suffer from over-parametrisation, and it is not possible to attach a specific significance to individual elements of an intensity matrix. 
{
While one can typically construct a certain behaviour  by means of structuring the sub--intensity matrix $\mat{T}$, the opposite task of deducing such a behaviour from a given matrix is typically not possible. Some simple cases, however, may be described. For instance, $p=1$ means one phase and the resulting distribution is exponential, hence unimodal. For $p=2$, bimodality cannot be achieved either, as one could at most aim for a mixture of exponentials. For $p=3$ it is possible to have a mixture of an exponential with an Erlang($2$) which is bimodal. }
}
\end{remark}
For further details on phase--type expressions, we refer to \citet{albrecher2019matrix} and \citet{bladt2017matrix}.

\subsection{Multivariate phase--type distributions}
A non--negative random vector $\vect{X}=(X_1,...,X_n)$ is phase--type distributed (MVPH) if all non--negative, non-vanishing linear combinations of its coordinates $X_i$, $i=1,...,n$ have a (univariate) phase--type distribution. This is the most general definition of a multivariate phase--type distribution which, however, lacks practicality since it does not suggest how to construct such distributions. It contains a sub--class of multivariate distributions, MPH$^*$, which have multidimensional Laplace transforms of the form
 \begin{equation}
  L_{\mat{X}}(\vect{u}; { \vect{\pi},\mat{T},\mat{R}} ) =\Exp(e^{-<\vect{u},\mat{X}>})= \vect{\pi} \left( \mat{\Delta}(\mat{R}\vect{u}) -\mat{T} \right)^{-1}\vect{t} .\label{eq:Kulkarni} 
  \end{equation}
and we write that $\vect{X}\sim \mbox{MPH}^*(\vect{\pi},\mat{T},\mat{R})$.  
Here $(\vect{\pi},\mat{T})$ is a phase--type representation of dimension $p$, say, $\mat{R}$ is a $p\times n$ matrix and $\vect{u}=(u_1,...,u_n)$ {$\in \mathbb{R}_+^n$. Furthermore, the joint Laplace transform exists in a neighbourhood around zero (\cite[Thm.8.1.2]{bladt2017matrix})}.

The form \eqref{eq:Kulkarni} is established from the following probabilistic construction (cf.\ \citet{Kulkarni:1989ti}). Consider the Markov jump process $\{ X_t\}_{t\geq 0}$ underlying the phase--type distribution with representation $(\vect{\pi},\mat{T})$. The $n$ columns of $\mat{R}=\{ r_{ik} \}$ are $p$--dimensional vectors which contain non--negative numbers. These numbers are ``rewards'' to be earned during sojourns in state $i$. If 
$\tau$ denotes the time until absorption of the underlying Markov jump process, then
\begin{equation}
  X_k = \int_0^\tau \sum_{i=1}^p1\{ X_t=i \} r_{ik}\dd t,\ \ \ k=1,...,n   \label{reward-construction}
\end{equation} 
is the total reward earned according to column $k$ of $\mat{R}$ until absorption. { The structure matrix $\mat{R}$ hence picks scaled sojourns out of the underlying Markov jump process. Correlation between different total rewards, $X_i$ and $X_j$ say, will then depend on the structure of $\mat{R}$ and on the underlying stochastic process. If there are common states in which reward is earned for both $X_i$ and $X_j$, then this will contribute to a positive correlation between them. If there are no common states, the correlation will be entirely generated by the structure of the $\mat{T}$ matrix. Negative correlation between $X_i$ and $X_j$ is achieved if large rewards earned in one reduces the one earned in the other and vice versa. Specific constructions of dependencies between Phase--type distributed random variables with given marginals is non--trivial, see. e.g.\ \citet{biv1} for an example with exponentially distributed marginals. %we write the exponential distributions as a higher order Phase--type distributions on an augmented state--space.
}

{The random variables $X_k$ defined in \eqref{reward-construction}} are again 
phase--type distributed and in general dependent since different variables may be generated through earning positive rewards on certain common states (while in other states there may be zero reward for one variable whenever the other has positive reward). 
If all $r_{ik}> 0$, $i=1,...,p$, then $X_k$ is phase--type distributed with initial distribution $\vect{\pi}$ and 
sub--intensity matrix $\mat{\Delta}^{-1}(\vect{r}_{\cdot k})\mat{T}$. This follows easily from a sample path argument: if reward $r_{ik}$ is earned during a sojourn in state $i$, then the distribution of the reward during a sojourn is exponentially distributed with intensity $-t_{ii}/r_{ik}$. 

If some $r_{ik}=0$, then finding a representation for $X_k$ is more involved. Let $\vect{w}\geq \vect{0}$ denote a non--zero vector. For obtaining the $k$'th marginal distribution we would choose $\vect{w}=\vect{e}_k^\prime$, the $k$'th Euclidean unit vector, while for a more general projection we may
choose $\vect{w}=c_1 \vect{e}_1+...+c_n \vect{e}_k$ for some constants $c_i$, $i=1,...,n$. For this given $\vect{w}$, decompose the set of transient states $E=\{1,...,p\}$ into $E=E_+ \cup E_0$, where $E_+$ denotes states $i\in E$ for
which $(\mat{R}\vect{w})_i >0$ and $E_0$ states $i\in E$ for
which $(\mat{R}\vect{w})_i = 0$. Decompose $\vect{\pi}=(\vect{\pi}_+,\vect{\pi}_0)$ and 
\begin{equation}
  \mat{T} = \begin{pmatrix}
\mat{T}_{++} & \mat{T}_{+0} \\
\mat{T}_{0+} & \mat{T}_{00}
\end{pmatrix}\label{eq:decompose} \end{equation}
accordingly. Then we have the following theorem which is proved in \cite[p.441]{bladt2017matrix}. 
\begin{theorem}\label{th:projection-MPH}
The distribution of $\langle \vect{X},\vect{w}\rangle$ is given by an atom at zero of size ${ q}=\boldsymbol{\pi}_{0}\left(\boldsymbol{I}-\left(-\boldsymbol{T}_{00}\right)^{-1} \boldsymbol{T}_{0+}\right) \boldsymbol{e}$ and an absolute continuous part given by a possibly defective phase-type distribution with representation $(\vect{\pi}_{\vect{w}},\mat{T}_{\vect{w}})$, where
\[ \boldsymbol{\pi}_{w}=\boldsymbol{\pi}_{+}+\boldsymbol{\pi}_{0}\left(-\boldsymbol{T}_{00}\right)^{-1} \boldsymbol{T}_{0+} \text { and } \boldsymbol{T}_{w}=\boldsymbol{\Delta}\left((\boldsymbol{R} \boldsymbol{w})_{+}\right)^{-1}\left(\boldsymbol{T}_{++}+\boldsymbol{T}_{+0}\left(-\boldsymbol{T}_{00}\right)^{-1} \boldsymbol{T}_{0+}\right)  \]
\end{theorem}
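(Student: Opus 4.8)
The plan is to reduce the statement to the two ingredients already developed in this section: the reward representation \eqref{reward-construction} and the behaviour of reward transforms with strictly positive rates. First I would collapse the projection into a single reward integral: by \eqref{reward-construction} and linearity,
\begin{equation*}
\langle \vect{X},\vect{w}\rangle=\sum_{k=1}^n w_k X_k=\int_0^\tau (\mat{R}\vect{w})_{X_t}\,\dd t=\sum_{i\in E_+}(\mat{R}\vect{w})_i\,V_i,\qquad V_i:=\int_0^\tau \mathbf{1}\{X_t=i\}\,\dd t .
\end{equation*}
The decisive observation is that reward accrues only during sojourns in $E_+$, because $(\mat{R}\vect{w})_i=0$ for $i\in E_0$. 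Consequently $\langle\vect{X},\vect{w}\rangle=0$ exactly on those trajectories that reach absorption without ever entering $E_+$, and this event will account for the atom at zero.

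Next I would pass to the process $\{X_t\}$ \emph{watched} on $E_+$, obtained by excising the (reward-free) time intervals spent in $E_0$ and concatenating the surviving $E_+$ pieces. By the strong Markov property this is again a Markov jump process on $E_+$ together with an absorbing state: from $i\in E_+$ it either jumps directly inside $E_+$, governed by $\mat{T}_{++}$, or it enters $E_0$, performs an excursion there, and re-enters $E_+$ (or is absorbed). Since the law of the re-entry state of an $E_0$-excursion is encoded in $(-\mat{T}_{00})^{-1}\mat{T}_{0+}$, the matrix of probabilities of leaving $E_0$ into each $E_+$ state before absorption, the resulting sub-intensity matrix is
\begin{equation*}
\mat{S}=\mat{T}_{++}+\mat{T}_{+0}(-\mat{T}_{00})^{-1}\mat{T}_{0+}.
\end{equation*}
I would check that $\mat{S}$ is a genuine sub-intensity matrix: its off-diagonal entries are manifestly non-negative, while $\mat{S}\vect{e}=-\vect{t}_+-\mat{T}_{+0}(-\mat{T}_{00})^{-1}\vect{t}_0\le\vect{0}$ follows by reading $\mat{T}\vect{e}=-\vect{t}$ blockwise, where $\vect{t}_+,\vect{t}_0$ are the blocks of $\vect{t}$ over $E_+$ and $E_0$. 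Treating the initial state the same way — a direct start in $E_+$ contributing $\vect{\pi}_+$, a start in $E_0$ followed by re-entry contributing $\vect{\pi}_0(-\mat{T}_{00})^{-1}\mat{T}_{0+}$ — identifies the (defective) initial vector $\vect{\pi}_{\vect{w}}=\vect{\pi}_++\vect{\pi}_0(-\mat{T}_{00})^{-1}\mat{T}_{0+}$, and its defect $1-\vect{\pi}_{\vect{w}}\vect{e}=\vect{\pi}_0(\mat{I}-(-\mat{T}_{00})^{-1}\mat{T}_{0+})\vect{e}=q$ is precisely the mass of trajectories absorbed without visiting $E_+$, i.e.\ the atom at zero.

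Finally, because the watching operation leaves each total sojourn $V_i$ unchanged, $\langle\vect{X},\vect{w}\rangle=\sum_{i\in E_+}(\mat{R}\vect{w})_i V_i$ is a reward-weighted absorption time of the watched process $(\vect{\pi}_{\vect{w}},\mat{S})$, now with \emph{strictly positive} rates $(\mat{R}\vect{w})_i>0$ throughout $E_+$. This is exactly the all-positive-rewards case treated earlier in this section, where scaling each sojourn in state $i$ by $1/(\mat{R}\vect{w})_i$ turns the sub-intensity matrix into $\mat{\Delta}((\mat{R}\vect{w})_+)^{-1}\mat{S}=\mat{T}_{\vect{w}}$ while preserving the initial vector, which delivers the stated defective phase-type representation $(\vect{\pi}_{\vect{w}},\mat{T}_{\vect{w}})$ of the absolutely continuous part.

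I expect the main obstacle to be the rigorous justification that the watched process is Markovian with sub-intensity matrix $\mat{S}$: one must argue carefully that $E_0$-excursions returning to their state of origin (which surface as self-returns feeding the diagonal of $\mat{S}$) are handled correctly, and that collapsing the $E_0$ time scale does not distort the sojourn totals $V_i$ entering the reward. As an independent verification I would evaluate the joint Laplace transform \eqref{eq:Kulkarni} at $\vect{u}=s\vect{w}$, namely $\vect{\pi}(s\mat{\Delta}(\mat{R}\vect{w})-\mat{T})^{-1}\vect{t}$, and perform the Schur-complement inversion along the $E_+/E_0$ partition; the Schur complement of $-\mat{T}_{00}$ equals $s\mat{\Delta}((\mat{R}\vect{w})_+)-\mat{S}$, and the reduction reproduces $q+\vect{\pi}_{\vect{w}}(s\mat{I}-\mat{T}_{\vect{w}})^{-1}(-\mat{T}_{\vect{w}}\vect{e})$, confirming both the atom and the phase-type part analytically.
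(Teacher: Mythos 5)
Your argument is correct, and it is worth noting that the paper itself does not prove Theorem \ref{th:projection-MPH} at all --- it simply cites \cite[p.441]{bladt2017matrix}; the closest in-paper analogue is the proof of Theorem \ref{th:proj2}, which establishes the generalization with exponents $\vect{\alpha}$ purely algebraically, by the block (Schur-complement) inversion of $\mat{\Delta}(\mat{R}\vect{w}^{\vect{\alpha}})\mat{\Delta}(u^{\vect{\alpha}})-\mat{T}$ and the identification of the resulting terms with $q$ and $\vect{\pi}_{\vect{w}}(u\mat{I}-\mat{T}_{\vect{w}})^{-1}\vect{t}_{\vect{w}}$. That computation is exactly what you relegate to an ``independent verification'' at the end, so your primary route --- collapsing $\langle\vect{X},\vect{w}\rangle$ to $\sum_{i\in E_+}(\mat{R}\vect{w})_i V_i$, censoring the chain on $E_0$ to get the sub-intensity $\mat{S}=\mat{T}_{++}+\mat{T}_{+0}(-\mat{T}_{00})^{-1}\mat{T}_{0+}$ and defective initial vector $\vect{\pi}_{\vect{w}}$, and then invoking the all-positive-rewards scaling --- is a genuinely different, probabilistic derivation. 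Your route buys interpretability: the atom $q$ is visibly the mass of paths absorbed from $E_0$ without touching $E_+$, the defect $1-\vect{\pi}_{\vect{w}}\vect{e}=q$ falls out of the bookkeeping, and the form of $\mat{T}_{\vect{w}}$ is explained rather than merely verified; its cost is the need to justify rigorously that the watched process is Markov with generator $\mat{S}$ (a standard but nontrivial strong-Markov/censoring argument, which you correctly flag). The algebraic route is shorter, self-contained, and --- as the paper's Theorem \ref{th:proj2} shows --- extends mechanically to the Mittag--Leffler setting where the probabilistic sample-path picture is less immediate. All your intermediate identities check out: the blockwise reading of $\mat{T}\vect{e}=-\vect{t}$ gives $\mat{S}\vect{e}=-\vect{t}_+-\mat{T}_{+0}(-\mat{T}_{00})^{-1}\vect{t}_0\leq\vect{0}$, and $\vect{\pi}_0\bigl(\mat{I}-(-\mat{T}_{00})^{-1}\mat{T}_{0+}\bigr)\vect{e}$ is indeed both the defect of $\vect{\pi}_{\vect{w}}$ and the probability of never entering $E_+$.
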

This means that 
\begin{eqnarray}
\vect{\pi} \left(  \mat{\Delta}(\mat{R}u\vect{w}) -\mat{T} \right)^{-1}\vect{t}&=&\Exp \left( e^{-\langle \vect{X},u\vect{w}\rangle} \right) \nonumber \\
&=&\Exp \left( e^{-u\langle \vect{X},\vect{w}\rangle} \right) \nonumber \\
&=& { q} + \vect{\pi}_{\vect{w}}(u\mat{I}-\mat{T}_{\vect{w}})^{-1}\vect{t}_{\vect{w}}, \label{eq:basic-id}
\end{eqnarray}
where $\vect{t}_{\vect{w}}=-\mat{T}_{\vect{w}}\vect{e}$.

\begin{remark}\normalfont 
It is still an open question whether $\mbox{MPH}^*\!\subset\!\mbox{MVPH}$ or whether $\mbox{MPH}^*=\mbox{MVPH}$.
%the class of distributions with a Laplace transform of the form \eqref{eq:Kulkarni} is a proper subset of the class of multivariate phase--type distributions.
\end{remark}

\begin{remark}\rm
{ As for univariate phase--type distributions, representations $(\vect{\pi},\mat{T},\mat{R})$ of MPH$^*$ are not uniquely determined by their distributions, and they may be over--parametrised as well. In particular, the interplay between $\mat{T}$ and $\mat{R}$ introduces further ambiguity.}
\end{remark}

While both MPH$^*$ and MPVH distributions lack explicit formulas for distribution and density functions, there is a sub--class of MPH$^*$ distributions that does allow explicit forms. The latter is the one where the structure of the underlying Markov chain is of so--called {\it feed--forward} type. 

Let $\mat{C}_1,...,\mat{C}_n$ be sub--intensity matrices and let $\mat{D}_1,...,\mat{D}_n$ denote non--negative matrices 
such that $-\mat{C}_i\vect{e}=\mat{D}_i\vect{e}$. The matrices $\mat{D}_i$ are not necessarily square matrices, with the number of rows being equal to the number of rows in $\mat{C}_i$ and the number of columns equal to the number of rows (and columns) of $\mat{C}_{i+1}$. Define
\begin{align}\label{feed-forward_structure1} 
\vect{\beta}=(\vect{\pi},\vect{0},...,\vect{0}) \ \ \mbox{and} \ \ \mat{T}=
\begin{pmatrix}
\mat{C}_1 & \mat{D}_1 & \mat{0} & \cdots & \mat{0} \\
\mat{0} &  \mat{C}_2 & \mat{D}_2 & \cdots & \mat{0} \\
\mat{0} & \mat{0} & \mat{C}_3 & \cdots & \mat{0} \\
\vdots & \vdots & \vdots & \vdots\vdots\vdots & \vdots \\
\mat{0} & \mat{0} &\mat{0} & \cdots & \mat{C}_n
\end{pmatrix} 
 \end{align}
 and let the reward matrix be
 \begin{align}\label{feed-forward_structure2} 
 \mat{R}= \begin{pmatrix}
 \vect{e} & \vect{0} & \vect{0} & \cdots & \vect{0} \\
 \vect{0} & \vect{e} & \vect{0} & \cdots & \vect{0} \\
 \vect{0} & \vect{0} & \vect{e} & \cdots & \vect{0} \\
 \vdots & \vdots & \vdots & \vdots\vdots\vdots & \vdots \\
 \vect{0} & \vect{0} & \vect{0} & \cdots & \vect{e}
 \end{pmatrix} . 
 \end{align}
 {The structure of the $\mat{R}$ matrix implies that the $i$'th total reward, $X_i$, then equals the inter--arrival time between arrivals $i-1$ and $i$. 
Positive correlation between two consecutive inter--arrivals $i-1$ and $i$ can then be obtained by choosing the matrix $\mat{D}_i$ in such a way that a long (short) duration of the Markov chain in block $i-1$ will imply a long (short) duration in block $i$ as well. For a negative correlation we have to choose the matrix $\mat{D}_1$ such that the implications are reversed.
 } 
The joint density of the MPH$^*$ distribution is {then} given by 
\begin{equation}
f(x_1,...,x_n;{ \vect{\beta},\mat{T},\mat{R}})= \vect{\pi}e^{\mat{C}_1x_1}\mat{D}_1 e^{\mat{C}_2x_2}\mat{D}_2\cdots \mat{D}_{n-1}e^{\mat{C}_n x_n}\mat{D}_n\vect{e}.\label{eq:dens-map}
\end{equation}
\begin{remark}\rm
The matrices $\mat{C}_i$ are sub--intensity matrices, providing a phase--type distributed
time until arrival $i$. The matrices $\mat{D}_i$ are non--negative matrices containing intensities for initiating a new inter--arrival time for arrival $i+1$ at the time of the arrival $i$. Hence the matrices $\mat{D}_i$ create the dependence between the inter--arrivals. In particular, if $\mat{D}_i = \vect{c}_i\vect{\pi}_{i+1}$, where $\vect{c}_i=-\mat{C}_i\vect{e}$ is the exit rate (column) vector corresponding to $\mat{C}_i$ and $\vect{\pi}_{i+1}$ is some probability (row) vector on $\{1,2,...,p_i\}$, then the inter--arrivals are independent.
\end{remark}

\begin{remark}\rm
{ The (full) matrix $\mat{D}_n$ is not really needed for our purposes, but only the exit vector $\vect{c}_n=-\mat{C}_n\vect{e}=\mat{D}_n\vect{e}$. Thus we may rewrite \eqref{eq:dens-map} in the form}
 
\begin{equation}
f(x_1,...,x_n;{ \vect{\beta},\mat{T},\mat{R}})= \vect{\pi}e^{\mat{C}_1x_1}\mat{D}_1 e^{\mat{C}_2x_2}\mat{D}_2\cdots \mat{D}_{n-1}e^{\mat{C}_n x_n}\vect{c}_n. \label{eq:dens-map1}
\end{equation}
We shall, however, maintain the notation with $\mat{D}_n$ for notational reasons. {Since 
$-\mat{C}_i\vect{e}=\mat{D}_i\vect{e}$ for all $i$, this also implies the exit vector
\[  \vect{t} = -\mat{T}\vect{e} = (0,0,...,0,\vect{c}_n)^\prime  , \]
so $\mat{D}_n\vect{e}$, which is not part of $\mat{T}$, is part of $\vect{t}$ (see \eqref{basic-full-int-mat}).
}
\end{remark}

\begin{remark}\rm
{ Note that the restriction $-\mat{C}_i\vect{e}=\mat{D}_i\vect{e}$ reduces the effective number of parameters contributed from those matrices from $2p_i^2$ to $2p_i^2-p_i$. In particular, the model of \eqref{eq:dens-map1}, and therefore also  \eqref{eq:dens-map}, has $p_1-1+\sum_{i=1}^{n-1}p_i(2p_i-1)+p_n^2$ effective degrees of freedom.}

\end{remark}

\begin{remark}\rm
If $\mat{C}_i=\mat{C}$ and $\mat{D}_i=\mat{D}$ for all $i$, then \eqref{eq:dens-map} is the joint density function for the first $n$ inter--arrival times of a Markovian Arrival Process (MAP) (see e.g.\  \citet{Neuts:1979tp},\citet{bladt2017matrix}). This class of point processes is dense in class of point process on $\mathbb{R}_+$ (see \citet{Asmussen:1993tn}), and therefore the class of distributions given by \eqref{eq:dens-map} is also dense {-- in the sense of weak convergence and with flexible dimension of the matrices $\mat{C}$ and $\mat{D}$ --} in the class of multivariate distributions on $\mathbb{R}_+^n$. 
\end{remark}
Later we shall need the joint fractional moments for such distributions, which are given in the following lemma.
\begin{lemma}\label{lemma:MAP-structure}
Suppose that $\vect{X}=(X_1,X_2,...,X_n)$ has a joint phase--type distribution with density  \eqref{eq:dens-map}.
Then { for $\theta_i>0$, $i=1,\dots,n$},
\[  \Exp (X_1^{\theta_1}X_2^{\theta_2}\cdots X_n^{\theta_n}) = \left(\prod_{i=1}^n\Gamma (\theta_i+1)\right)
\vect{\pi}\left( \prod_{i=1}^n  (-\mat{C}_i)^{-\theta_i-1}\mat{D}_i \right)\vect{e}
  \]
\end{lemma}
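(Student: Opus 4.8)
The plan is to compute the joint fractional moment directly from its integral definition against the density \eqref{eq:dens-map}, exploiting the fact that in the feed--forward structure each variable $x_i$ enters only through the single matrix exponential $e^{\mat{C}_i x_i}$. Writing
\begin{equation*}
\Exp(X_1^{\theta_1}\cdots X_n^{\theta_n}) = \int_0^\infty\!\!\cdots\!\int_0^\infty x_1^{\theta_1}\cdots x_n^{\theta_n}\, \vect{\pi} e^{\mat{C}_1 x_1}\mat{D}_1 e^{\mat{C}_2 x_2}\mat{D}_2\cdots \mat{D}_{n-1}e^{\mat{C}_n x_n}\mat{D}_n\vect{e}\,\dd x_1\cdots \dd x_n,
\end{equation*}
I would first observe that every matrix and vector appearing here is entrywise nonnegative: the $\mat{C}_i$ are sub--intensity matrices, so they have nonnegative off--diagonal entries and the semigroup $e^{\mat{C}_i x_i}$ is entrywise nonnegative, while $\vect{\pi}$, the $\mat{D}_i$ and $\vect{e}$ are nonnegative by construction. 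Consequently the integrand is a nonnegative scalar and Tonelli's theorem justifies viewing the multiple integral as an iterated integral that may be evaluated in any order.

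Since the scalar factors $x_i^{\theta_i}$ commute with all matrices, I would group each $x_i^{\theta_i}$ with its associated exponential $e^{\mat{C}_i x_i}$ and integrate the $n$ variables one at a time. Because $x_i$ appears in no other factor, all remaining matrices can be pulled outside the $x_i$--integral, and the computation reduces entirely to the single matrix identity
\begin{equation*}
\int_0^\infty x^{\theta} e^{\mat{C} x}\,\dd x = \Gamma(\theta+1)\,(-\mat{C})^{-\theta-1}, \qquad \theta>0,
\end{equation*}
valid for any sub--intensity matrix $\mat{C}$. This is the matrix analogue of the elementary gamma integral $\int_0^\infty x^\theta e^{cx}\,\dd x = \Gamma(\theta+1)(-c)^{-\theta-1}$ for $c<0$; here the fractional matrix power is defined through $(-\mat{C})^{-s}=\tfrac{1}{\Gamma(s)}\int_0^\infty u^{s-1}e^{\mat{C}u}\,\dd u$, and convergence is guaranteed because the eigenvalues of $\mat{C}$ have strictly negative real part. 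Applying this identity to each variable turns $e^{\mat{C}_i x_i}$ into $\Gamma(\theta_i+1)(-\mat{C}_i)^{-\theta_i-1}$ and leaves the matrix chain intact, so that collecting the scalar gamma factors and the matrix factors yields exactly
\begin{equation*}
\left(\prod_{i=1}^n \Gamma(\theta_i+1)\right)\vect{\pi}\,(-\mat{C}_1)^{-\theta_1-1}\mat{D}_1\cdots(-\mat{C}_n)^{-\theta_n-1}\mat{D}_n\,\vect{e},
\end{equation*}
which is the claimed expression.

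The only nontrivial ingredient is the matrix gamma integral, and I expect this to be the main point requiring care: one must confirm that $(-\mat{C})^{-\theta-1}$ is well defined for noninteger $\theta$ and that the integral representation of the fractional power holds. This, however, is precisely the identity already underlying the univariate fractional moment formula $\Exp(\tau^\alpha)=\Gamma(\alpha+1)\vect{\pi}(-\mat{T})^{-\alpha}\vect{e}$ quoted earlier (taking $\alpha=\theta$ there and using $\vect{t}=-\mat{T}\vect{e}$, which gives $\int_0^\infty x^\theta e^{\mat{T}x}\,\dd x=\Gamma(\theta+1)(-\mat{T})^{-\theta-1}$), so it may be invoked directly. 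Once it is in place, the remaining steps are the routine separation of the iterated integral and bookkeeping of the matrix product, with no further obstacles.
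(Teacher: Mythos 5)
Your proof is correct and follows essentially the same route as the paper: separate the iterated integral (the paper does this implicitly, you invoke Tonelli explicitly) and evaluate each factor via the matrix gamma integral $\int_0^\infty x^{\theta}e^{\mat{C}x}\,\dd x=\Gamma(\theta+1)(-\mat{C})^{-\theta-1}$. The only cosmetic difference is that the paper justifies this identity by a functional-calculus argument applied to the scalar Laplace transform of $z\mapsto z^{\theta}$, whereas you justify it through the integral representation of the fractional matrix power; both are standard and equivalent here.
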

\begin{proof}
It is sufficient to prove the lemma for $n=2$.
\begin{eqnarray*}
 \Exp (Z_1^{\theta_1}Z_2^{\theta_2})&=&\int_0^\infty\int_0^\infty z_1^{\theta_1}z_2^{\theta_2}
 \vect{\pi}e^{\mat{C}_1z_1}\mat{D}_1e^{\mat{C}_2z_2}\mat{D}_2\vect{e}\dd z_1 \dd z_2\\
 &=& \vect{\pi}\int_0^\infty z_1^{\theta_1}e^{\mat{C}_1z_1}\dd z_1 \mat{D}_1 
\int_0^\infty z_2^{\theta_2}e^{\mat{C}_2z_2}\dd z_2 \mat{D}_2\vect{e} \\
&=& \vect{\pi} L_{z^{\theta_1}}(-\mat{C}_1) \mat{D}_1 L_{z^{\theta_2}}(-\mat{C}_2) \mat{D}_2\vect{e} , 
\end{eqnarray*}
where $L_{z^{\theta}}(u) = \Gamma (u+1)/u^{\theta+1}$ is the Laplace transform for $z\rightarrow z^\theta$. Since the Laplace transforms are analytic (where they are defined), the result follows by a functional calculus argument (see Theorem 3.4.4 of \citet{bladt2017matrix}).
\end{proof}

\subsection{Matrix Mittag--Leffler distributions}
Let $(\vect{\pi},\mat{T})$ be a phase--type representation. Then a random variable $X$ has a matrix Mittag--Leffler (MML) distribution with representation $(\alpha,\vect{\pi},\mat{T})$, if it has Laplace transform
\[  L_X(u;{ \alpha,\vect{\pi},\mat{T}})=\vect{\pi}\left( u^\alpha \mat{I} - \mat{T}  \right)^{-1}\vect{t} , \quad { u\ge 0,}\]
where $0<\alpha\leq 1$. We write $X\sim \mbox{MML}(\alpha,\vect{\pi},\mat{T})$. Let
\[ E_{\alpha, \beta}(z)=\sum_{k=0}^{\infty} \frac{z^{k}}{\Gamma(\alpha k+\beta)}, \quad {z\in\mathbb{C}}, \]
denote the Mittag--Leffler (ML) function. Then (see \citet{albrecher2019matrix}) the density of $X$ is given by
\[  f(x;{\alpha,\vect{\pi},\mat{T}})=x^{\alpha-1}\vect{\pi}\,E_{\alpha,\alpha}\left(\mat{T}x^\alpha \right)\,\vect{t},\quad {x>0},    \]
and the corresponding c.d.f.\ is
\[  F(x;{\alpha,\vect{\pi},\mat{T}})=1-\vect{\pi}E_{\alpha,1}\left(\mat{T}x^\alpha \right)\vect{e}, \quad {x>0}.  \]
The ML function with { (complex)} matrix argument $\vect{A}$ is defined as
\begin{align*}
E_{\alpha, \beta}(\vect{A})=\sum_{k=0}^{\infty} \frac{\vect{A}^{k}}{\Gamma(\alpha k+\beta)} .
\end{align*}
For $\beta>0$, one can express the (then entire) ML function of a matrix $\mat{A}$ by Cauchy's formula
\[ E_{\alpha,\beta}(\mat{A}) =\frac{1}{2\pi \ii}\int_\gamma E_{\alpha,\beta}(z)(z\mat{I}-\mat{A})^{-1}\dd z , \]
where $\gamma$ is a simple path enclosing the eigenvalues of $\mat{A}$. Invoking the residue theorem, for each entry of the matrix $E_{\alpha,\beta}(z)(z\mat{I}-\mat{A})^{-1}$, then provides a simple method for calculating $E_{\alpha,\beta}(\mat{A})$. 

As outlined in \citet{albrecher2019matrix}, MML distributions { with $0<\alpha<1$} are heavy-tailed with tail indices less than one, so that their mean does not exist. This may be too restrictive in many situations, and one way to obtain a closely related class of distributions is by considering power transformations of the original MML distributed random variables. Indeed, if $X\sim \mbox{MML}(\alpha,\vect{\pi},\mat{T})$,  then $X^{1/\nu} $ has density
\[  f(x;{ \nu,\alpha,\vect{\pi},\mat{T}})=\nu x^{\nu\alpha-1}\vect{\pi}E_{\alpha,\alpha}\left(\mat{T}x^{\nu\alpha} \right)\vect{t},\quad {x>0}, \]
and distribution function
\[  F(x;{ \nu,\alpha,\vect{\pi},\mat{T}})=1-\vect{\pi}E_{\alpha,1}(\mat{T}x^{\alpha \nu})\vect{e},\quad {x>0}, \]
for $\nu>0$ (cf.\ \citet{albrecher2019matrix}).  Rewriting $\beta = \nu \alpha$ leads to { the reparametrization}
\begin{equation}
  f(x;{ \beta,\alpha,\vect{\pi},\mat{T}})=\frac{\beta}{\alpha} x^{\beta-1}\vect{\pi}E_{\alpha,\alpha}\left(\mat{T}x^{\beta} \right)\vect{t} ,\quad {x>0},\label{eq:power-MML-dens}
   \end{equation}
   and
\begin{equation}
   F(x;{ \beta,\alpha,\vect{\pi},\mat{T}})=1-\vect{\pi}E_{\alpha,1}(\mat{T}x^{\beta})\vect{e},\quad {x>0} . \label{eq:power-MML-cdf}
\end{equation}
Thus, for any $0<\alpha\leq 1$ and $\beta>0$, \eqref{eq:power-MML-dens} and \eqref{eq:power-MML-cdf} define densities and their corresponding distribution functions, with tail index $\beta$ instead of $\alpha$. We shall refer to distributions with densities of the form \eqref{eq:power-MML-dens} as power MML and write $X\sim \mbox{MML}^{1/\nu}(\alpha,\vect{\pi},\mat{T})$. Their Laplace transforms are somewhat more involved. 
Indeed, the Laplace transform for $X\sim \mbox{MML}^{1/\nu}(\alpha,\vect{\pi},\mat{T})$ is given by (see formula (5.1.30) in \citet{ML-book-2014} and compare to \cite[p.364]{ML-book-2014})
%define the function
% \[  
% H(z) = \frac{1}{2\pi\ii}\int_\gamma \frac{\Gamma (s)\Gamma (1-s)\Gamma (\beta (1-s))}{\Gamma (\alpha (1-s))}z^{-s}ds ,
%   \] 

 \begin{equation}
 {L_X(s; \nu,\alpha,\vect{\pi},\mat{T})=s^{-\nu\alpha} \vect{\pi} \left( \sum_{k=0}^\infty  \frac{\Gamma (\nu\alpha (k+1))}{\Gamma (\alpha (k+1))} \left( s^{-\nu\alpha} \mat{T} \right)^k \right)\vect{t}} ,{\quad s\geq 0}, \label{eq:LPT-MMLnu}
 \end{equation}
 where the series expansion relates to a generalized Wright hypergeometric function ({cf.} with  \cite[p.364]{ML-book-2014} for further details).
The similarity with the Laplace transform for $Y\sim \mbox{MML}(\alpha,\vect{\pi},\mat{T})$ may be appreciated by rewriting
 \begin{equation}
  L_Y(s;{\alpha,\vect{\pi},\mat{T}})=\vect{\pi}(s^\alpha\mat{I} - \mat{T})^{-1} \vect{t} = s^{-\alpha}\vect{\pi}(\mat{I}-s^{-\alpha}\mat{T})^{-1}\vect{t} ,{\quad s\geq 0},\label{eq:LPT-MML1} 
  \end{equation}
where we also notice that \eqref{eq:LPT-MMLnu} reduces to \eqref{eq:LPT-MML1} for $\nu=1$.

\section{Generalized matrix Mittag--Leffler distributions}\label{sec:GMML}
The convolution of Mittag--Leffler distributions is not a Mittag--Leffler distribution. However, if the components in the convolution have the same tail index, then the resulting distribution is a MML.
\begin{theorem}\label{th:convolution-MML}
Suppose that
 $ X\sim \mbox{MML}(\alpha,\vect{\pi}_1,\mat{T}_1)$ and $ Y\sim \mbox{MML}(\alpha,\vect{\pi}_2,\mat{T}_2)$.
 Then
\[ X+Y \sim \mbox{MML}(\alpha,\vect{\pi},\mat{T}) , \]
with
\[ \vect{\pi}=(\vect{\pi}_1,\vect{0}) \ \ \ \ \mbox{and} \ \ \ \ \mat{T} =
\begin{pmatrix}
  \mat{T}_1 & \vect{t}_1\vect{\pi_2} \\
  \mat{0} & \mat{T}_2
  \end{pmatrix}  .\]
\end{theorem}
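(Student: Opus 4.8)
The plan is to prove the identity at the level of Laplace transforms and then invoke uniqueness. Since the statement concerns a convolution, I read $X$ and $Y$ as independent, so that $L_{X+Y}(u)=L_X(u)L_Y(u)$ for $u\ge 0$. By the definition of the MML Laplace transform, $L_X(u)=\vect{\pi}_1(u^\alpha\mat{I}-\mat{T}_1)^{-1}\vect{t}_1$ and $L_Y(u)=\vect{\pi}_2(u^\alpha\mat{I}-\mat{T}_2)^{-1}\vect{t}_2$, where $\vect{t}_i=-\mat{T}_i\vect{e}$. The whole task therefore reduces to verifying a single algebraic identity in the variable $s=u^\alpha$ for the proposed block representation.

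First I would record the exit vector $\vect{t}=-\mat{T}\vect{e}$ of the candidate $\mat{T}$. Using $\vect{\pi}_2\vect{e}=1$ and $\vect{t}_1=-\mat{T}_1\vect{e}$, the top block of $\mat{T}\vect{e}$ collapses to $\mat{T}_1\vect{e}+\vect{t}_1\vect{\pi}_2\vect{e}=\mat{T}_1\vect{e}+\vect{t}_1=\vect{0}$, while the bottom block is $\mat{T}_2\vect{e}=-\vect{t}_2$. Hence $\vect{t}=(\vect{0},\vect{t}_2)^\prime$. Next, with $s=u^\alpha$, the matrix $s\mat{I}-\mat{T}$ is block upper triangular with diagonal blocks $s\mat{I}-\mat{T}_1$ and $s\mat{I}-\mat{T}_2$ and off-diagonal block $-\vect{t}_1\vect{\pi}_2$, so its inverse has the standard closed form whose cross term is $(s\mat{I}-\mat{T}_1)^{-1}\vect{t}_1\vect{\pi}_2(s\mat{I}-\mat{T}_2)^{-1}$ (the minus sign from the off-diagonal block cancels the minus in the triangular-inverse formula).

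Pre-multiplying by $\vect{\pi}=(\vect{\pi}_1,\vect{0})$ annihilates the lower block row, and post-multiplying by $\vect{t}=(\vect{0},\vect{t}_2)^\prime$ selects precisely the cross term, giving
\[ \vect{\pi}(u^\alpha\mat{I}-\mat{T})^{-1}\vect{t}=\big(\vect{\pi}_1(u^\alpha\mat{I}-\mat{T}_1)^{-1}\vect{t}_1\big)\big(\vect{\pi}_2(u^\alpha\mat{I}-\mat{T}_2)^{-1}\vect{t}_2\big). \]
The two factors on the right are exactly $L_X(u)$ and $L_Y(u)$, so the right-hand side equals $L_{X+Y}(u)$. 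As Laplace transforms on $\mathbb{R}_+$ determine the distribution uniquely, $X+Y\sim\mbox{MML}(\alpha,\vect{\pi},\mat{T})$ with the stated representation.

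This is essentially the classical phase-type convolution identity read off at argument $u^\alpha$ rather than $u$, since the MML transform $\vect{\pi}(u^\alpha\mat{I}-\mat{T})^{-1}\vect{t}$ has the same rational-matrix shape as the phase-type transform. Accordingly I do not expect any deep difficulty; the only genuine checkpoints are the two bookkeeping facts — that $\vect{\pi}_2\vect{e}=1$ forces the top block of the exit vector to vanish, and that the block-triangular inversion reproduces the product structure. The mild obstacle is simply tracking the sign coming from the off-diagonal block $-\vect{t}_1\vect{\pi}_2$; once that is handled the identity is immediate.
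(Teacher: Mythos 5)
Your proof is correct and follows essentially the same route as the paper's: write $L_{X+Y}=L_XL_Y$ by independence, apply the block upper-triangular inversion formula at argument $s=u^\alpha$, and identify the result as $\vect{\pi}(u^\alpha\mat{I}-\mat{T})^{-1}\vect{t}$ for the stated $(\vect{\pi},\mat{T})$. Your explicit check that $\vect{\pi}_2\vect{e}=1$ forces the exit vector to be $(\vect{0},\vect{t}_2)^\prime$ is a detail the paper leaves implicit, but the argument is the same.
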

 \begin{proof}
 This result follows from the Laplace transform of $X+Y$ being
 \begin{eqnarray*}
 L_{X+Y}(u;{ \alpha,\vect{\pi},\mat{T}})&=& \vect{\pi}_1(u^\alpha \mat{I}- \mat{T}_1)^{-1}\vect{t}_1\vect{\pi}_2(u^\alpha \mat{I}- \mat{T}_2)^{-1}\vect{t}_2 \\
 &=& (\vect{\pi}_1,\vect{0}) 
 \begin{pmatrix}
 (u^\alpha \mat{I} - \mat{T}_1)^{-1} & -(u^\alpha \mat{I} - \mat{T}_1)^{-1} (-\mat{t}_1\vect{\pi}_2)(u^\alpha \mat{I} - \mat{T}_2)^{-1} \\
 \mat{0} & (u^\alpha \mat{I} - \mat{T}_2)^{-1} \\
 \end{pmatrix}
 \begin{pmatrix}
 \vect{0} \\
 \vect{t}_2
 \end{pmatrix}\\
 &=&(\vect{\pi}_1,\vect{0}) 
 \left( u^\alpha \mat{I} -
 \begin{pmatrix}
  \mat{T}_1 & \mat{t}_1\vect{\pi}_2 \\
 \mat{0} & \mat{T}_2 \\
 \end{pmatrix}
 \right)^{-1}
 \begin{pmatrix}
 \vect{0} \\
 \vect{t}_2
 \end{pmatrix} .
 \end{eqnarray*}
 \end{proof}
Since $ X\sim \mbox{MML}(\alpha,\vect{\pi}_1,\mat{T}_1)$ implies that $cX \sim \mbox{MML}(\alpha,\vect{\pi},\mat{T})$ for any constant $c>0$, where 
  \[  \vect{\pi}=\vect{\pi}_1 \ \ \mbox{and} \ \ \mat{T}=c^{-\alpha}\mat{T}_1  , \]
we conclude that if $X_1,X_2,...,X_n$ are independent MML with the {\it same} tail index $\alpha$, then any linear combination $c_1X_1+...+c_nX_n$ with $c_1,c_2,...,c_n\geq 0$ is again MML with tail index $\alpha$.

The convolution of MML distributions with different tail indices are not MML distributions, but naturally lead to an extended class of MML distributions which we refer to as {\it Generalized} MML, as we will define in the sequel.
If $ X\sim \mbox{MML}(\alpha,\vect{\pi}_1,\mat{T}_1)$ and $ Y\sim \mbox{MML}(\beta,\vect{\pi}_2,\mat{T}_2)$ with $\alpha\neq \beta$, then calculations similar to the proof of Theorem \ref{th:convolution-MML} lead to $X+Y$ having Laplace transform
\begin{equation}
 L_{X+Y}(u) =(\vect{\pi}_1,\vect{0}) 
\left( \mat{\Delta}(u^\alpha \mat{I},u^\beta \mat{I}) -
\begin{pmatrix}
 \mat{T}_1 & \mat{t}_1\vect{\pi}_2 \\
\mat{0} & \mat{T}_2 \\
\end{pmatrix}
\right)^{-1}
\begin{pmatrix}
\vect{0} \\
\vect{t}_2
\end{pmatrix} , \label{eq:gen-MML}
\end{equation}
where $\mat{\Delta}(\mat{A},\mat{B})$ denotes the block diagonal matrix 
\[ \mat{\Delta}(\mat{A},\mat{B}) = \begin{pmatrix}
\mat{A} & \mat{0} \\
\mat{0} & \mat{B}
\end{pmatrix}  \]
for square matrices $\mat{A}$ and $\mat{B}$.
The linear combination $c_1X+c_2Y$ will then have a Laplace transform on the form,
\[  L_{c_1X+c_2Y}(u)=
(\vect{\pi}_1,\vect{0}) 
\left( \mat{\Delta}(u^\alpha \mat{I},u^\beta \mat{I}) -
\begin{pmatrix}
 c_1^{-\alpha}\mat{T}_1 & c_1^{-\alpha}\mat{t}_1\vect{\pi}_2 \\
\mat{0} & c_2^{-\beta}\mat{T}_2 \\
\end{pmatrix}
\right)^{-1}
\begin{pmatrix}
\vect{0} \\
c_2^{-\beta}\vect{t}_2
\end{pmatrix} .
 \]
This motivates the following definition. 

 \begin{definition}\normalfont
 A random variable $X$ is said to have a (univariate) generalized matrix Mittag--Leffler distribution, if there exist $\alpha_1,...,\alpha_n$ with $0<\alpha_i\leq 1$, and a phase--type representation $(\vect{\pi},\mat{T})$ for which the absolutely continuous part of its Laplace transform is given by
 \[  L^{\text{cont}}_X(u;{\vect{\alpha},\vect{\pi},\mat{T}}) = \vect{\pi}(\mat{\Delta}(u^{\alpha_1} \mat{I}_1,...,u^{\alpha_n} \mat{I}_n)-\mat{T})^{-1}\vect{t} ,  \quad{ u\geq 0}, \]
 where $\mat{I}_k$ are identity matrices and $\mbox{dim}(\mat{I}_1)+...+\mbox{dim}(\mat{I}_n)=\mbox{dim}(\mat{T})$. We write
 \[  X\sim \mbox{GMML}(\vect{\alpha},\vect{\pi},\mat{T}) , \]
 where $\vect{\alpha}=(\alpha_1,...,\alpha_n){\in\mathbb{R}_+^n}$.\end{definition}

Then, if $X_1,...,X_n$ are independent with 
\[ X_i\sim \mbox{GMML}(\vect{\alpha}_i,\vect{\pi}_i,\mat{T}_i) ,
\]
we get 
\[ X_1+...+X_n \sim  \mbox{GMML}(\vect{\alpha},\vect{\pi},\mat{T}) \]
where
\[  \vect{\alpha}=(\vect{\alpha}_1,...,\vect{\alpha}_n) , \]
\[  \vect{\pi}=(\vect{\pi}_1,\vect{0},...,\vect{0}) , \]
and
\[  \mat{T} =
\begin{pmatrix}
\mat{T}_1 & \vect{t}_1\vect{\pi}_2 & \mat{0} & ... & \mat{0} \\
\mat{0} & \mat{T}_2 & \vect{t}_2\vect{\pi}_3 & ... & \mat{0} \\
\mat{0} & \mat{0} & \mat{T}_3 & ... & \mat{0} \\
\vdots & \vdots &\vdots & \vdots\vdots\vdots & \vdots \\
\mat{0} &\mat{0} &\mat{0} &\vdots\vdots\vdots & \mat{T}_n
\end{pmatrix} .
  \]
By scaling, any non--negative non--zero linear combination of GMML distributed random variables will again follow a GMML distribution.  

% Then apply to rational LPT, with the usual known form, then realize the MML is closed under concvolutions
% and linear combinations of ind. r.v., to motivate the general definition.

\section{The multivariate matrix Mittag--Leffler distribution}\label{sec4}
Motivated by Section \ref{sec:GMML}, we proceed now to define the multivariate MML in a similar way as their underlying multivariate phase--type distributions.
\begin{definition}\normalfont
A random vector $\vect{X}=(X_1,...,X_n)$ has a multivariate GMML distribution in the wide sense, if all non--negative non--vanishing linear combinations $c_1X_1+...+c_nX_n$ have a GMML distribution.
\end{definition}
As for MVPH distributions, this definition is not very practical from a constructive point of view, and we shall introduce a subclass inspired by \eqref{eq:Kulkarni}. To this end we first notice the following result.
\begin{lemma}\label{lemma:Bernstein-Widder}
Let $\phi (s_1,...,s_k)$ be a multi\-dimensional Laplace transform and
let $g_1(x)$$,...,$ $g_k(x)$ denote functions for which $-g_i$ are completely monotone. Then it follows that
\[  L(s_1,...,s_k)=\phi (g_1(s_1),...,g_k(s_k))  \]
is again a Laplace transform. 
\end{lemma}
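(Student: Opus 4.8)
The plan is to prove the lemma probabilistically, by realising $L$ as the Laplace transform of a \emph{subordinated} nonnegative random vector. The operative property of the $g_i$ is the Bernstein property, namely that $s\mapsto e^{-t\,g_i(s)}$ is completely monotone (hence a univariate Laplace transform) for every $t\ge 0$; equivalently $g_i\ge 0$ with completely monotone derivative $g_i'$. This is exactly what the motivating maps $g_i(s)=s^{\alpha_i}$, $0<\alpha_i\le 1$, satisfy, and it is the form of the hypothesis I would use. Since $\phi$ is a $k$-dimensional Laplace transform, I would first fix a nonnegative random vector $\vect{Y}=(Y_1,\dots,Y_k)$ with $\phi(u_1,\dots,u_k)=\Exp[e^{-\langle\vect{u},\vect{Y}\rangle}]$ for $\vect{u}\ge\vect{0}$, taking $\phi(\vect{0})=1$ (the case of a general finite measure is recovered by normalisation).

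Next I would pass from the Bernstein functions to subordinators. By Bernstein's theorem each $s\mapsto e^{-t\,g_i(s)}$ is, for fixed $t\ge0$, the Laplace transform of a sub-probability measure on $[0,\infty)$, and the multiplicativity $e^{-(t+t')g_i}=e^{-t g_i}\,e^{-t' g_i}$ shows that these measures form a convolution semigroup. Hence there exists a subordinator $S_i=(S_i(t))_{t\ge0}$, a nondecreasing L\'evy process started at $0$, whose Laplace exponent is $g_i$, i.e.\ $\Exp[e^{-s\,S_i(t)}]=e^{-t\,g_i(s)}$ for all $s,t\ge0$. I would take $S_1,\dots,S_k$ mutually independent and independent of $\vect{Y}$ on a common probability space.

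I would then set $\vect{Z}=(S_1(Y_1),\dots,S_k(Y_k))$, a nonnegative random vector because each $S_i$ has nondecreasing nonnegative paths and $Y_i\ge0$. Conditioning on $\vect{Y}$ and using the mutual independence of the subordinators yields
\[
\Exp\!\left[e^{-\sum_{i=1}^k s_i Z_i}\right]
=\Exp\!\left[\prod_{i=1}^k \Exp\!\left[e^{-s_i S_i(Y_i)}\mid\vect{Y}\right]\right]
=\Exp\!\left[\prod_{i=1}^k e^{-Y_i\,g_i(s_i)}\right]
=\phi\bigl(g_1(s_1),\dots,g_k(s_k)\bigr),
\]
which is precisely $L(s_1,\dots,s_k)$. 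Thus $L$ is the Laplace transform of $\vect{Z}$, establishing the claim.

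The only substantial ingredient is the correspondence between Bernstein functions and subordinators invoked in the second step (Bernstein's theorem plus the convolution-semigroup structure); everything else is a routine conditioning argument justified by Fubini's theorem, since all integrands are nonnegative. I therefore expect this correspondence, together with the measurability of $(t,\omega)\mapsto S_i(t,\omega)$ needed to evaluate $S_i$ at the random argument $Y_i$, to be the main point to state carefully. A purely analytic alternative would invoke the multivariate Hausdorff--Bernstein--Widder theorem, which characterises $k$-dimensional Laplace transforms through nonnegativity of all mixed derivatives $(-1)^{|\vect{m}|}\partial^{\vect{m}}\phi\ge0$, and then verify via the Fa\`a di Bruno formula that coordinatewise substitution of Bernstein functions preserves this sign pattern; this route is correct but combinatorially heavier, which is why I would prefer the subordination argument above.
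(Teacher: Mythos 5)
Your argument is correct, but it takes a genuinely different route from the paper. The paper's proof is purely analytic: it invokes the multidimensional Hausdorff--Bernstein--Widder characterisation (a function is a $k$-dimensional Laplace transform iff all mixed partials satisfy $(-1)^{n_1+\cdots+n_k}\partial^{n_1+\cdots+n_k}\phi/\partial s_1^{n_1}\cdots\partial s_k^{n_k}\ge 0$) and asserts that coordinatewise substitution of the $g_i$ preserves this sign pattern --- precisely the Fa\`a di Bruno verification you flag as the ``combinatorially heavier'' alternative, and which the paper does not actually carry out. Your subordination argument is constructive: it exhibits the random vector $(S_1(Y_1),\dots,S_k(Y_k))$ whose Laplace transform is $L$, which buys more than the bare statement --- for $g_i(s)=s^{\alpha_i}$ the stable scaling $S_{\alpha_i}(Y_i)\stackrel{d}{=}Y_i^{1/\alpha_i}S_{\alpha_i}(1)$ recovers exactly the product representation $\vect{X}\stackrel{d}{=}\vect{W}^{1/\alpha}\bullet\vect{S_\alpha}$ that the paper later proves separately as Theorem \ref{repPHstable}. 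You are also right to read the hypothesis ``$-g_i$ completely monotone'' (which, taken literally, would force $g_i\le 0$ and excludes $s^{\alpha}$) as the Bernstein property $g_i\ge 0$ with $g_i'$ completely monotone; that is clearly what is meant. One small point worth a sentence in a polished write-up: if $g_i(0)>0$ the associated subordinator is killed and $\vect{Z}$ is defective, so $L$ is then the Laplace transform of a sub-probability measure --- consistent with the paper's use of possibly defective distributions, and vacuous in the intended application where $g_i(0)=0$.
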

\begin{proof}
This follows immediately from the multidimensional Bernstein--Widder theorem, see \cite[p.87]{Bochner2005}, which states that
a multivariate function $\phi (s_1,...,s_k)$ is a multi\-dimensional Laplace transform if and only if
it is infinitely often differentiable and 
\[ (-1)^{n_{1}+\cdots+n_{k}} \frac{\partial^{n_{1}+\ldots+n_{k}} \phi}{\partial s_{1}^{n_{1}} \ldots \partial s_{k}^{n_{k}}} \geq 0  \]
for all $n_1\geq 0,...,n_k\geq 0$. 
\end{proof}
From this we immediately get the following important result.
\begin{theorem}
Let $(\vect{\pi},\mat{T},\mat{R})$ be a representation for a multivariate PH distribution \eqref{eq:Kulkarni}. Then
the multidimensional function
\begin{equation}
  \phi (\vect{u}) = \vect{\pi}\left(  \mat{\Delta}(\mat{R}\vect{u}^{\vect{\alpha}} ) - \mat{T} \right)^{-1}\vect{t},\quad { \vect{u}\in \mathbb{R}^n_+}, \label{eq:joint-laplace}\end{equation}
with $\vect{u}^{\vect{\alpha}}=(u_1^{\alpha_1},...,u_n^{\alpha_n})$, is a multidimensional Laplace transform.
\end{theorem}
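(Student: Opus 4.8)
The plan is to recognise $\phi$ in \eqref{eq:joint-laplace} as nothing more than the genuine MVPH Laplace transform \eqref{eq:Kulkarni} with each argument $u_i$ replaced by $u_i^{\alpha_i}$, and then to let Lemma~\ref{lemma:Bernstein-Widder} do the work. Concretely, write $L_{\mat{X}}(\vect{v}) = \vect{\pi}(\mat{\Delta}(\mat{R}\vect{v}) - \mat{T})^{-1}\vect{t}$ for the Laplace transform attached to the MPH$^*$ representation $(\vect{\pi},\mat{T},\mat{R})$. By construction this equals $\Exp(e^{-\langle \vect{v},\mat{X}\rangle})$ for the non-negative reward vector $\mat{X}$, so it is a bona fide $n$-dimensional Laplace transform and in particular completely monotone on $\mathbb{R}^n_+$ in the multivariate Bernstein--Widder sense. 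Setting $g_i(u) = u^{\alpha_i}$, we have the identity $\phi(\vect{u}) = L_{\mat{X}}(g_1(u_1),\dots,g_n(u_n))$, and the assertion is precisely the output of Lemma~\ref{lemma:Bernstein-Widder} applied with $\phi := L_{\mat{X}}$ and these inner maps.

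The one point that must be checked is that each $g_i(u) = u^{\alpha_i}$ is an admissible inner function, i.e.\ a Bernstein function (so that $g_i'$ is completely monotone), which is exactly the property under which Lemma~\ref{lemma:Bernstein-Widder} guarantees that the composition is again a Laplace transform. For $0 < \alpha_i \leq 1$ one would compute
\[
g_i^{(m)}(u) = \alpha_i(\alpha_i-1)\cdots(\alpha_i-m+1)\,u^{\alpha_i-m}, \qquad m \geq 1,
\]
and observe that among the $m$ factors only the first, $\alpha_i$, is positive, while the remaining $m-1$ factors $\alpha_i-1,\dots,\alpha_i-m+1$ are all $\leq 0$ (strictly negative when $0<\alpha_i<1$). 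Hence the product has sign $(-1)^{m-1}$ (or vanishes, in the boundary case $\alpha_i=1$ which gives the identity map), so $(-1)^{m-1}g_i^{(m)}(u) \geq 0$ for every $m \geq 1$ and $u>0$. Together with $g_i \geq 0$ this shows $g_i$ is Bernstein, confirming that $u\mapsto u^{\alpha_i}$ is the prototypical admissible inner function.

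Finiteness of the composition is not an issue: for $\vect{u}\in\mathbb{R}^n_+$ one has $\vect{u}^{\vect{\alpha}}\in\mathbb{R}^n_+$, and $L_{\mat{X}}$ is finite there (indeed in a neighbourhood of the origin, by the existence of the MPH$^*$ transform), so $\phi$ is well defined on the full orthant and $(\mat{\Delta}(\mat{R}\vect{u}^{\vect{\alpha}})-\mat{T})^{-1}$ exists throughout. Assembling these observations, Lemma~\ref{lemma:Bernstein-Widder} delivers that $\phi$ is a multidimensional Laplace transform, which is the claim.

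I expect the main — and essentially the only — obstacle to be bookkeeping around the hypothesis of Lemma~\ref{lemma:Bernstein-Widder}: one must be careful that the coordinatewise substitution $u_i\mapsto u_i^{\alpha_i}$ is applied from the outside, in the correct variables, to the already-established transform $L_{\mat{X}}$, and that each scalar map is of the permitted Bernstein type. A direct route, namely verifying the multivariate complete monotonicity inequalities $(-1)^{n_1+\cdots+n_n}\,\partial^{\,n_1+\cdots+n_n}\phi/\partial u_1^{n_1}\cdots\partial u_n^{n_n}\geq 0$ by hand on the matrix expression, would be far more laborious because of the powers $u_i^{\alpha_i}$ sitting inside the matrix inverse; the role of Lemma~\ref{lemma:Bernstein-Widder} is exactly to reduce the entire statement to the elementary Bernstein property of the scalar maps $u\mapsto u^{\alpha_i}$.
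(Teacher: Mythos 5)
Your proposal is correct and follows exactly the route the paper intends: the theorem is stated as an immediate consequence of Lemma~\ref{lemma:Bernstein-Widder}, obtained by composing the genuine MPH$^*$ Laplace transform \eqref{eq:Kulkarni} with the coordinatewise Bernstein maps $u_i \mapsto u_i^{\alpha_i}$. Your explicit sign check that $u\mapsto u^{\alpha_i}$ is Bernstein for $0<\alpha_i\leq 1$ is a detail the paper leaves implicit, but it is the right (and only) thing to verify.
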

From Theorem \ref{th:projection-MPH} we now obtain the following.
\begin{theorem}\label{th:proj1}
Let $\vect{w}\geq \vect{0}$ denote a non--zero vector and let $\vect{X}=(X_1,...,X_n)$ have a distribution given by the joint Laplace transform \eqref{eq:joint-laplace} with all $\alpha_i=\alpha$. Decompose $(\vect{\pi},\mat{T})$ as in \eqref{eq:decompose} according to $\mat{R}\vect{w}^\alpha$.
Then the distribution of $\langle \vect{X},\vect{w}\rangle$ has an atom at zero of size ${q}=\boldsymbol{\pi}_{0}\left(\boldsymbol{I}-\left(-\boldsymbol{T}_{00}\right)^{-1} \boldsymbol{T}_{0+}\right) \boldsymbol{e}$, and a possibly defective absolute continuous part which is $\mbox{MML}(\alpha,\vect{\pi}_{\vect{w}^\alpha},\vect{T}_{\vect{w}^\alpha})$, where $(\vect{\pi}_{\vect{w}^\alpha},\vect{T}_{\vect{w}^\alpha})$ is given in Theorem \ref{th:projection-MPH}.
\end{theorem}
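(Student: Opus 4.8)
The plan is to reduce the statement to the already established phase--type projection result, Theorem~\ref{th:projection-MPH} and its Laplace-transform form \eqref{eq:basic-id}, by exploiting the hypothesis that all indices coincide, $\alpha_i=\alpha$, which allows a single scalar factor $s^\alpha$ to be pulled out of the diagonal argument. First I would write the univariate Laplace transform of the projection in terms of the joint transform \eqref{eq:joint-laplace}. Since $\phi$ is the genuine joint Laplace transform of $\vect{X}$ (by the preceding theorem) and $s\vect{w}\in\mathbb{R}^n_+$ whenever $s\ge 0$, we have
\[
\Exp\!\left(e^{-s\langle \vect{X},\vect{w}\rangle}\right)=\Exp\!\left(e^{-\langle \vect{X},s\vect{w}\rangle}\right)=\phi(s\vect{w}),\qquad s\ge 0 .
\]

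Next I would carry out the key factorization. Because every $\alpha_i$ equals $\alpha$, one has $(s\vect{w})^{\vect{\alpha}}=\big((sw_1)^\alpha,\dots,(sw_n)^\alpha\big)=s^\alpha\vect{w}^\alpha$, so that $\mat{\Delta}\big(\mat{R}(s\vect{w})^{\vect{\alpha}}\big)=s^\alpha\mat{\Delta}(\mat{R}\vect{w}^\alpha)$, and \eqref{eq:joint-laplace} becomes
\[
\phi(s\vect{w})=\vect{\pi}\big(s^\alpha\mat{\Delta}(\mat{R}\vect{w}^\alpha)-\mat{T}\big)^{-1}\vect{t}.
\]
This is exactly the expression on the left of \eqref{eq:basic-id}, with the scalar argument taken to be $s^\alpha$ and with the weight vector $\vect{w}^\alpha$ in place of $\vect{w}$. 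Here I would note that the partition of $E$ into $E_+$ and $E_0$ used in \eqref{eq:decompose} is the same whether it is read off from $\mat{R}\vect{w}^\alpha$ or from $\mat{R}\vect{w}$, because $\mat{R}\ge\vect{0}$ and, for $w_k\ge 0$, one has $w_k^\alpha>0$ if and only if $w_k>0$; hence $q$, $\vect{\pi}_{\vect{w}^\alpha}$ and $\mat{T}_{\vect{w}^\alpha}$ of Theorem~\ref{th:projection-MPH} (with weight $\vect{w}^\alpha$) are well defined.

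Applying \eqref{eq:basic-id} with weight $\vect{w}^\alpha$ and evaluating its scalar argument at $s^\alpha$ then gives
\[
\phi(s\vect{w})=q+\vect{\pi}_{\vect{w}^\alpha}\big(s^\alpha\mat{I}-\mat{T}_{\vect{w}^\alpha}\big)^{-1}\vect{t}_{\vect{w}^\alpha}.
\]
By the definition of the matrix Mittag--Leffler class, the second summand is precisely the Laplace transform of an $\mbox{MML}(\alpha,\vect{\pi}_{\vect{w}^\alpha},\mat{T}_{\vect{w}^\alpha})$ law, which may be defective since $\vect{\pi}_{\vect{w}^\alpha}$ need not sum to one. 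Letting $s\to\infty$ identifies the constant as $q=\lim_{s\to\infty}\Exp(e^{-s\langle\vect{X},\vect{w}\rangle})=\Prob(\langle\vect{X},\vect{w}\rangle=0)$, the size of the atom at zero, the remaining mass being carried by the absolutely continuous MML component; uniqueness of the Laplace transform then yields the claimed decomposition. I do not expect a substantial obstacle here: the substitution of $s^\alpha$ is immediate because \eqref{eq:basic-id} holds for every nonnegative value of its scalar argument---the resolvent existing since $\mat{T}_{\vect{w}^\alpha}$ is a sub-intensity matrix---and $s^\alpha\ge 0$. The only genuine points to verify carefully are the coincidence of the two state-space partitions noted above and the recognition of the resulting term as a bona fide (defective) MML transform.
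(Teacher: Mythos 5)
Your proposal is correct and follows essentially the same route as the paper: compute $\Exp(e^{-s\langle\vect{X},\vect{w}\rangle})$ from \eqref{eq:joint-laplace}, use $\alpha_i\equiv\alpha$ to factor out $s^\alpha$, and then invoke the identity \eqref{eq:basic-id} with weight $\vect{w}^\alpha$ at scalar argument $s^\alpha$ to read off the atom $q$ and the (possibly defective) $\mbox{MML}(\alpha,\vect{\pi}_{\vect{w}^\alpha},\mat{T}_{\vect{w}^\alpha})$ part. The extra checks you include (coincidence of the partitions and identification of $q$ as the mass at zero) are correct and only make explicit what the paper leaves implicit.
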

\begin{proof} The result follows from
\begin{eqnarray*}
\Exp \left( e^{-u\langle \vect{X},\vect{w}\rangle}  \right)&=&\Exp \left( e^{-\langle \vect{X},u\vect{w}\rangle}  \right)\\
&\stackrel{\eqref{eq:joint-laplace}}{=}&
\vect{\pi} \left(  \mat{\Delta}(\mat{R}u^\alpha \vect{w}^\alpha) -\mat{T} \right)^{-1}\vect{t} \\
&\stackrel{\eqref{eq:basic-id}}{=}& {q} + \vect{\pi}_{\vect{w}^\alpha}(u^\alpha \mat{I}-\mat{T}_{\vect{w}^\alpha})^{-1}\vect{t}_{\vect{w}^\alpha} .
\end{eqnarray*}
\end{proof}

For possibly distinct $\alpha_i$, we proceed as follows. 

\begin{theorem}\label{th:proj2}
Let $\vect{w}\geq \vect{0}$ denote a non--zero vector and let $\vect{X}=(X_1,...,X_n)$ be a random vector with joint Laplace transform \eqref{eq:joint-laplace}. Decompose $(\vect{\pi},\mat{T})$ as in \eqref{eq:decompose} according to
$\mat{R}\vect{w}^{\vect{\alpha}}$.
Then the distribution of $\langle \vect{X},\vect{w}\rangle$ has an atom at zero of size $p=\boldsymbol{\pi}_{0}\left(\boldsymbol{I}-\left(-\boldsymbol{T}_{00}\right)^{-1} \boldsymbol{T}_{0+}\right) \boldsymbol{e}$ and a possibly defective absolute continuous part which is $\mbox{GMML}(\vect{\alpha},\vect{\pi}_{\vect{w}^\alpha},\vect{T}_{\vect{w}^\alpha})$, where $(\vect{\pi}_{\vect{w}^\alpha},\vect{T}_{\vect{w}^\alpha})$ is given in Theorem \ref{th:projection-MPH}.
\end{theorem}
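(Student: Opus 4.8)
The plan is to follow the proof of Theorem \ref{th:proj1} as closely as possible, reducing the univariate projection to the phase--type projection of Theorem \ref{th:projection-MPH}, but now keeping track of the distinct indices $\alpha_i$. The starting point is again
\[ \Exp\left(e^{-u\langle\vect{X},\vect{w}\rangle}\right)=\Exp\left(e^{-\langle\vect{X},u\vect{w}\rangle}\right)\stackrel{\eqref{eq:joint-laplace}}{=}\vect{\pi}\left(\mat{\Delta}(\mat{R}(u\vect{w})^{\vect{\alpha}})-\mat{T}\right)^{-1}\vect{t},\qquad u\geq 0. \]
The essential difference with Theorem \ref{th:proj1} is that the $i$-th diagonal entry of $\mat{\Delta}(\mat{R}(u\vect{w})^{\vect{\alpha}})$ equals $\sum_k r_{ik}w_k^{\alpha_k}u^{\alpha_k}$, so that no single scalar power of $u$ can be factored out of the whole matrix as was possible when all $\alpha_i$ coincided.

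First I would note that the partition of the state space is the same for every $u>0$ as the one induced by $\mat{R}\vect{w}^{\vect{\alpha}}$: since all summands are non--negative, $(\mat{R}(u\vect{w})^{\vect{\alpha}})_i=0$ precisely when $i\in E_0$. Hence the block decomposition \eqref{eq:decompose} of $\mat{T}$ and the corresponding splitting of $\vect{\pi}$ are valid uniformly in $u$. I would then eliminate the states in $E_0$ exactly as in the proof of Theorem \ref{th:projection-MPH}: taking the Schur complement of the $E_0$--block, whose diagonal contribution vanishes and which therefore carries no $u$--dependence, produces the atom $q=\vect{\pi}_0(\mat{I}-(-\mat{T}_{00})^{-1}\mat{T}_{0+})\vect{e}$ at zero together with the initial vector $\vect{\pi}_{\vect{w}^\alpha}$ and the reduced generator $\mat{T}_{++}+\mat{T}_{+0}(-\mat{T}_{00})^{-1}\mat{T}_{0+}$ of Theorem \ref{th:projection-MPH}.

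The step where the distinct indices genuinely enter is the treatment of the surviving diagonal $\mat{\Delta}((\mat{R}(u\vect{w})^{\vect{\alpha}})_+)$. Grouping the states of $E_+$ according to the index $\alpha_k$ under which they accrue reward, this diagonal factorises as $\mat{\Delta}(u^{\alpha_1}\mat{I}_1,\dots,u^{\alpha_n}\mat{I}_n)\,\mat{\Delta}((\mat{R}\vect{w}^{\vect{\alpha}})_+)$, the block sizes $\dim(\mat{I}_k)$ recording how many surviving states belong to index $k$ (with empty blocks for coordinates having $w_k=0$). Since both factors are diagonal they commute, so absorbing the strictly positive scaling $\mat{\Delta}((\mat{R}\vect{w}^{\vect{\alpha}})_+)^{-1}$ into the reduced generator yields exactly $\mat{T}_{\vect{w}^\alpha}$ and $\vect{t}_{\vect{w}^\alpha}=-\mat{T}_{\vect{w}^\alpha}\vect{e}$ of Theorem \ref{th:projection-MPH}. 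The continuous part then takes the form $\vect{\pi}_{\vect{w}^\alpha}(\mat{\Delta}(u^{\alpha_1}\mat{I}_1,\dots,u^{\alpha_n}\mat{I}_n)-\mat{T}_{\vect{w}^\alpha})^{-1}\vect{t}_{\vect{w}^\alpha}$, which is precisely the defining Laplace transform of a (possibly defective) $\mbox{GMML}(\vect{\alpha},\vect{\pi}_{\vect{w}^\alpha},\mat{T}_{\vect{w}^\alpha})$.

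I expect this factorisation to be the main obstacle. In Theorem \ref{th:proj1} the uniform scalar $u^\alpha$ made the passage through \eqref{eq:basic-id} immediate, whereas here one must verify that the surviving diagonal really splits into the block--diagonal power matrix $\mat{\Delta}(u^{\alpha_1}\mat{I}_1,\dots,u^{\alpha_n}\mat{I}_n)$ times a positive scaling. This is exactly the point at which one uses that each surviving state contributes a single power $u^{\alpha_k}$, so that the grouping of $E_+$ by index supplies the block partition required by the definition of GMML.
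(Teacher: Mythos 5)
Your proof is correct and follows essentially the same route as the paper: factor the diagonal as $\mat{\Delta}((\mat{R}\vect{w}^{\vect{\alpha}})_+)\mat{\Delta}(u^{\vect{\alpha}})_+$ on the surviving states, eliminate the $E_0$ states by the Schur complement (which is $u$-free there, yielding the atom), and absorb the positive scaling into the reduced generator to recognise the GMML Laplace transform. The only difference is presentational: the paper writes out the full $2\times 2$ block inverse ($\mat{A}_{11},\dots,\mat{A}_{22}$) and contracts with $\vect{\pi}$ and $\vect{t}=-\mat{T}\vect{e}$ explicitly, whereas you invoke the $u$-independence of the $E_0$ block to transfer the phase--type computation verbatim — and your explicit flagging of the requirement that each surviving state accrue reward under a single index $\alpha_k$ is a point the paper's notation $\mat{\Delta}(\mat{R}\vect{w}^{\vect{\alpha}})\mat{\Delta}(u^{\vect{\alpha}})$ leaves implicit.
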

\begin{proof} 
We have that
\begin{eqnarray*}
\Exp \left( e^{-u\langle \vect{X},\vect{w}\rangle} \right)&=&
\Exp \left( e^{-\langle \vect{X},u\vect{w}\rangle} \right) \\
&=&\vect{\pi} \left( \mat{\Delta}(\mat{R}(u\vect{w})^{\vect{\alpha}})-\mat{T} \right)^{-1}\vect{t} \\
&=&\vect{\pi} \left( \mat{\Delta}(\mat{R}\vect{w}^{\vect{\alpha}})\mat{\Delta}(u^{\vect{\alpha}})-\mat{T} \right)^{-1}\vect{t},
\end{eqnarray*}
where $\mat{\Delta}(u^{\vect{\alpha}})=\mbox{diag}(u^{\alpha_1},...,u^{\alpha_n})$. Now splitting into blocks according to
$E_+$ and $E_0$, we see that
\begin{eqnarray*}
\vect{\pi} \left( \mat{\Delta}(\mat{R}\vect{w}^{\vect{\alpha}})\mat{\Delta}(u^{\vect{\alpha}})-\mat{T} \right)^{-1}\vect{t}
&=&\vect{\pi} 
\begin{pmatrix}
 \mat{\Delta}(\mat{R}\vect{w}^{\vect{\alpha}})_+\mat{\Delta}(u^{\vect{\alpha}})_+-\mat{T}_{++} & -\mat{T}_{+0} \\
 -\mat{T}_{0+} & -\mat{T}_{00}
 \end{pmatrix}^{-1}\vect{t}\\
 &=&(\vect{\pi}_+,\vect{\pi}_0)
\begin{pmatrix}
\mat{A}_{11} & \mat{A}_{12} \\
\mat{A}_{21} & \mat{A}_{22}
\end{pmatrix}
 \begin{pmatrix}
 \vect{t}_+ \\
 \vect{t}_0
 \end{pmatrix},
\end{eqnarray*}
where
\begin{eqnarray*}
\mat{A}_{11}&=&\left( \mat{\Delta}(\mat{R}\vect{w}^{\vect{\alpha}})_+\mat{\Delta}(u^{\vect{\alpha}})_+-\mat{T}_{++} - 
\mat{T}_{+0}(-\mat{T}_{00})^{-1}\mat{T}_{0+}
 \right)^{-1} \\
 &=& \left( \mat{\Delta}(u^{\vect{\alpha}})_+ - (\mat{\Delta}(\mat{R}\vect{w}^{\vect{\alpha}})_+)^{-1} \left[ \mat{T}_{++} + 
\mat{T}_{+0}(-\mat{T}_{00})^{-1}\mat{T}_{0+} \right]
 \right)^{-1} \mat{\Delta}(\mat{R}\vect{w}^{\vect{\alpha}})_+^{-1} \\
 &=& \left( \mat{\Delta}(u^{\vect{\alpha}})_+ - \mat{T}_{\vect{w}^{\vect{\alpha}}}
 \right)^{-1} \mat{\Delta}(\mat{R}\vect{w}^{\vect{\alpha}})_+^{-1}, \\
  & & \\
 \mat{A}_{12}&=&\left( \mat{\Delta}(u^{\vect{\alpha}})_+ - \mat{T}_{\vect{w}^{\vect{\alpha}}}
 \right)^{-1} \mat{\Delta}(\mat{R}\vect{w}^{\vect{\alpha}})_+^{-1}\mat{T}_{+0}(-\mat{T}_{00})^{-1}, \\
  & & \\
 \mat{A}_{21}&=& (-\mat{T}_{00})^{-1}\mat{T}_{0+} \left( \mat{\Delta}(u^{\vect{\alpha}})_+ - \mat{T}_{\vect{w}^{\vect{\alpha}}}
 \right)^{-1} \mat{\Delta}(\mat{R}\vect{w}^{\vect{\alpha}})_+^{-1}, \\
 & & \\
 \mat{A}_{22}&=&(-\mat{T}_{00})^{-1}\left( \mat{I} + \mat{T}_{0+}\left( \mat{\Delta}(u^{\vect{\alpha}})_+ - \mat{T}_{\vect{w}^{\vect{\alpha}}}
 \right)^{-1} \mat{\Delta}(\mat{R}\vect{w}^{\vect{\alpha}})_+^{-1}\mat{T}_{+0}(-\mat{T}_{00})^{-1} \right) .
\end{eqnarray*}
Then
\begin{eqnarray*}
\vect{\pi}_+\mat{A}_{11}+\vect{\pi}_0\mat{A}_{21}&=&
\vect{\pi}_{\vect{w}^\alpha}\left( \mat{\Delta}(u^{\vect{\alpha}})_+ - \mat{T}_{\vect{w}^{\vect{\alpha}}}
 \right)^{-1} \mat{\Delta}(\mat{R}\vect{w}^{\vect{\alpha}})_+^{-1}, \\
\vect{\pi}_+\mat{A}_{12}+\vect{\pi}_0\mat{A}_{22}&=&\vect{\pi}_0(-\mat{T}_{00})^{-1} + 
\vect{\pi}_{\vect{w}^{\vect{\alpha}}}\left( \mat{\Delta}(u^{\vect{\alpha}})_+ - \mat{T}_{\vect{w}^{\vect{\alpha}}}
 \right)^{-1} \mat{\Delta}(\mat{R}\vect{w}^{\vect{\alpha}})_+^{-1}\mat{T}_{+0}(-\mat{T}_{00})^{-1} .
\end{eqnarray*}
Now inserting
\[ \begin{pmatrix}
\vect{t}_+ \\
\vect{t}_0
\end{pmatrix} = 
-\mat{T}\vect{e} = \begin{pmatrix}
-\mat{T}_{++}\vect{e}-\mat{T}_{+0}\vect{e} \\
-\mat{T}_{0+}\vect{e}-\mat{T}_{00}\vect{e} 
\end{pmatrix},
  \]
we get 
\begin{eqnarray*}
\lefteqn{\left(\vect{\pi}_+\mat{A}_{11}+\vect{\pi}_0\mat{A}_{21}\right) \vect{t}_+ + 
\left(\vect{\pi}_+\mat{A}_{12}+\vect{\pi}_0\mat{A}_{22} \right)\vect{t}_0 }~~~~\\
&=&\vect{\pi}_0 (\mat{I} - (-\mat{T}_{00})^{-1}\mat{T}_{0+})\vect{e} + 
\vect{\pi}_{\vect{w}^{\vect{\alpha}}} \left( \mat{\Delta}(u^{\vect{\alpha}})_+ - \mat{T}_{\vect{w}^{\vect{\alpha}}}
 \right)^{-1}\vect{t}_{\vect{w}^{\vect{\alpha}}} \\
 &=& p + \vect{\pi}_{\vect{w}^{\vect{\alpha}}} \left( \mat{\Delta}(u^{\vect{\alpha}})_+ - \mat{T}_{\vect{w}^{\vect{\alpha}}}
 \right)^{-1}\vect{t}_{\vect{w}^{\vect{\alpha}}}
\end{eqnarray*}
with
\[ \vect{t}_{\vect{w}^{\vect{\alpha}}} = -\mat{T}_{\vect{w}^{\vect{\alpha}}} \vect{e}.  \]
\end{proof}
From the previous results we see that we have found a sub-class of multivariate matrix Mittag--Leffler distributions with explicit Laplace transform. This allows us to concentrate on this class, and to make the following definition.
\begin{definition}\normalfont 
Let $\vect{X}=(X_1,...,X_n)$ be a random vector. Then we say that $\vect{X}$ has a multivariate matrix generalized Mittag--Leffler distribution if it has joint Laplace transform given by \eqref{eq:joint-laplace}, and write
$$\vect{X}\sim \mbox{GMML}(\vect{\alpha},\vect{\pi},\mat{T},\mat{R}).$$
\end{definition}
The following result generalizes Theorem 3.6 of \citet{albrecher2019matrix} to the multivariate case. { In particular, it gives the probabilistic interpretation of the GMML class as a family of random vectors whose marginals are absorption times of randomly-scaled, time-inhomogeneous Markov processes. The dependence of the corresponding Markov processes arises from the fact that they are all generated according to a reward structure on an underlying common Markov jump process.}
%As for the univariate case, we have a decomposition of the multivariate MML random variables into products of scaled phase--type and stable random variables.
%\section{Decomposition and moments}
\begin{theorem}\label{repPHstable}
Let  $\vect{X}=(X_1,...,X_n)\sim \mbox{GMML}(\vect{\alpha},\vect{\pi},\mat{T},\mat{R})$. Then
\begin{align}\label{prodrepresentation}
\vect{X}\stackrel{d}{=}\vect{W^{1/\alpha}}\bullet \vect{S_\alpha},
\end{align}
where $\vect{W^{1/\alpha}}=(W_1^{1/\alpha_1},\dots,W_n^{1/\alpha_n})$ with $\vect{W}=(W_1,...,W_n)\sim \mbox{MPH}^*(\vect{\pi
},\mat{T},\mat{R})$ (see \eqref{eq:Kulkarni}), and where  $\vect{S_\alpha}=(S_{\alpha_1},\dots,S_{\alpha_n})$ is a vector of independent stable random variables, each with Laplace transform $\exp(-u^{\alpha_i})$. Here, $\bullet$ refers to component-wise multiplication of vectors.
\end{theorem}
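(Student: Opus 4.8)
The plan is to establish \eqref{prodrepresentation} by computing the joint Laplace transform of the right-hand side $\vect{W}^{1/\alpha}\bullet\vect{S_\alpha}$ and matching it with the defining Laplace transform \eqref{eq:joint-laplace} of $\vect{X}$; since multivariate Laplace transforms determine distributions on $\mathbb{R}_+^n$ uniquely, this equality of transforms yields the claimed equality in distribution.

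The single ingredient I would isolate at the outset is the scaling property of a positive stable variable. If $\Exp(e^{-uS_\alpha})=e^{-u^\alpha}$, then for any constant $c>0$ one has $\Exp(e^{-u\,cS_\alpha})=e^{-(uc)^\alpha}=e^{-c^\alpha u^\alpha}$, so that the choice $c=w^{1/\alpha}$ gives $\Exp(e^{-u\,w^{1/\alpha}S_\alpha})=e^{-wu^\alpha}$. This is precisely the mechanism that converts a sojourn of ``size'' $w$ into a Mittag--Leffler contribution, and it is the multivariate counterpart of the identity underlying the univariate representation in Theorem 3.6 of \citet{albrecher2019matrix}.

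I would then condition on $\vect{W}$, using that $\vect{S_\alpha}$ is independent of $\vect{W}$ and has independent components. Given $\vect{W}=\vect{w}$, the conditional transform factorizes and the scaling identity applies to each factor:
\begin{align*}
\Exp\left(e^{-\sum_{i=1}^n u_i W_i^{1/\alpha_i}S_{\alpha_i}}\,\middle|\,\vect{W}=\vect{w}\right)
=\prod_{i=1}^n \Exp\left(e^{-u_i w_i^{1/\alpha_i}S_{\alpha_i}}\right)
=\prod_{i=1}^n e^{-u_i^{\alpha_i}w_i}
=e^{-\langle\vect{u}^{\vect{\alpha}},\vect{w}\rangle}.
\end{align*}
Taking the outer expectation over $\vect{W}$ then produces $\Exp(e^{-\langle\vect{u}^{\vect{\alpha}},\vect{W}\rangle})=L_{\vect{W}}(\vect{u}^{\vect{\alpha}})$, the MPH$^*$ transform \eqref{eq:Kulkarni} evaluated at $\vect{u}^{\vect{\alpha}}=(u_1^{\alpha_1},\dots,u_n^{\alpha_n})$. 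Substituting the explicit form of \eqref{eq:Kulkarni} gives $\vect{\pi}(\mat{\Delta}(\mat{R}\vect{u}^{\vect{\alpha}})-\mat{T})^{-1}\vect{t}$, which is exactly $\phi(\vect{u})$ in \eqref{eq:joint-laplace}, completing the match.

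The computation is routine once the stable scaling identity is in hand, so the only point demanding care is the conditioning step: one must use the independence of $\vect{S_\alpha}$ from $\vect{W}$ so that conditioning on $\vect{W}=\vect{w}$ leaves the stable factors unaffected and mutually independent, and one must justify that the outer expectation indeed recovers $L_{\vect{W}}$ at the argument $\vect{u}^{\vect{\alpha}}$. Because all integrands are nonnegative and the relevant transforms exist in a neighbourhood of the origin (by the existence statement accompanying \eqref{eq:Kulkarni}), Tonelli's theorem legitimizes the interchange, and uniqueness of Laplace transforms delivers the result.
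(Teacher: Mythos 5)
Your proposal is correct and follows essentially the same route as the paper: the paper's proof likewise computes the Laplace transform of $\vect{W}^{1/\vect{\alpha}}\bullet\vect{S_\alpha}$ by conditioning on $\vect{W}$, applies the stable scaling identity to obtain $\exp(-\langle\vect{u}^{\vect{\alpha}},\vect{w}\rangle)$, and integrates against $F_{\vect{W}}$ to recover \eqref{eq:joint-laplace}. Your additional remarks on Tonelli and uniqueness of Laplace transforms merely make explicit what the paper leaves implicit.
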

\begin{proof}
We observe that
\begin{align*}
\E(\exp(-\langle\vect{u},\vect{W^{1/\alpha}}\bullet \vect{S_\alpha} \rangle))&=
\int_{\mathbb{R}_+^n} \E(\exp(-\langle\vect{u},\vect{w^{1/\alpha}}\bullet \vect{S_\alpha} \rangle)) \dd F_{\vect{W}}(\vect{w}) \\
&=
\int_{\mathbb{R}_+^n} \exp(-[u_1^{\alpha_1}w_1+\dots+u_n^{\alpha_n}w_n]) \dd F_{\vect{W}}(\vect{w}) \\
&=
\int_{\mathbb{R}_+^n} \exp(-\langle \vect{u^\alpha},\vect{w}\rangle) \dd F_{\vect{W}}(\vect{w}) \\
&=\vect{\pi}\left(  \mat{\Delta}(\mat{R}\vect{u}^{\vect{\alpha}} ) - \mat{T} \right)^{-1}\vect{t},
\end{align*}
which implies the desired representation.
\end{proof}
\begin{remark}\label{rem48}\normalfont
From representation \eqref{prodrepresentation}, we have that the marginals of any multivariate GMML distribution
are regularly varying with indices $\alpha_1,\ldots,\alpha_n$, all smaller than 1. Moreover, by the multivariate version of Breiman's lemma (cf. \citet{basrak}) and the fact that multivariate phase--type distributions have moments of all orders, it follows that the tail independence structure of the vector $\vect{S_\alpha}$ carries over to $\vect{X}$. That is, the multivariate GMML family introduced in this paper has (very) heavy-tailed GMML marginals, but is tail-independent. As mentioned in the introduction, application areas for such models are e.g.\ given in \citet{resnick02}.  
\end{remark} 

A consequence of $\alpha_i<1$ is that the mean does not exist. To alleviate this potential practical drawback, it was proposed in \citet{albrecher2019matrix} to consider power-transformed variables in the univariate case. In the same way, we propose the following definition.
\begin{definition}\normalfont
    Let $\vect{X}\sim \mbox{GMML}(\vect{\alpha},\vect{\pi},\mat{T},\mat{R})$. For $\vect{\nu}>\vect{0}$, we define
    \[ \vect{Y}=\vect{X^{1/\nu}}\sim \mbox{GMML}^{1/\nu}(\vect{\alpha},\vect{\pi},\mat{T},\mat{R}), \]
    and refer to it as the class of power multivariate MML distributions.
\end{definition}

\noindent Under the power transform, the class is in general no longer closed under linear combinations. For fixed $\vect{\alpha}$, however, it possesses the following denseness property (in contrast to distributions with Laplace transform \eqref{eq:joint-laplace}). Here `dense on $\mathbb{R}_+^n$' means dense in the sense of weak convergence among all distributions on $\mathbb{R}_+^n$.
\begin{theorem}\label{th410}
%Let $\mathcal{X}$ be the class of $\mbox{MML}(\vect{\alpha},\vect{\pi},\mat{T},\mat{R})$ variables, $\mathcal{X}^\ast_{\vect{\alpha}}$ the subclass of the $\mbox{MML}(\vect{\alpha},\vect{\pi},\mat{T},\mat{R},\vect{\nu})$ variables for a fixed $\vect{\alpha}$, and $\mathcal{X}_{\vect{\gamma}}$ the subclass of $\mbox{MML}(\vect{\alpha},\vect{\pi},\mat{T},\mat{R},\vect{\nu})$ with fixed marginal tail indices $\vect{\alpha}\bullet\vect{\nu}=\vect{\gamma}^{-1}>0$. Then
(i) The class of $\mbox{GMML}(\vect{\alpha},\vect{\pi},\mat{T},\mat{R})$ variables is dense on $\mathbb{R}_+^n$.\\
(ii) For any fixed  $\vect{\alpha}$, the class of $\mbox{GMML}^{1/\nu}(\vect{\alpha},\vect{\pi},\mat{T},\mat{R})$ variables is dense on $\mathbb{R}_+^n$.\\
(iii) For any fixed marginal tail indices $\vect{\alpha}\bullet\vect{\nu}=\vect{\gamma}^{-1}>0$, the class of\\ $\mbox{GMML}^{1/\nu}(\vect{\alpha},\vect{\pi},\mat{T},\mat{R})$ variables is dense on $\mathbb{R}_+^n$.
\end{theorem}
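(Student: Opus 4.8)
The plan is to derive all three statements from the product representation of Theorem \ref{repPHstable}, the denseness of the MAP-based subclass \eqref{eq:dens-map} of $\mbox{MPH}^*$, and a componentwise Slutsky argument. Fix a target law $\mu$ on $\mathbb{R}_+^n$ with $\vect{Z}\sim\mu$. By Theorem \ref{repPHstable}, every GMML vector is $\vect{X}\stackrel{d}{=}\vect{W}^{1/\vect{\alpha}}\bullet\vect{S}_{\vect\alpha}$, with $\vect{W}\sim\mbox{MPH}^*$ free to be chosen and $\vect{S}_{\vect\alpha}$ an independent vector of one-sided stable variables with Laplace transforms $\exp(-u^{\alpha_i})$; its power transform is $\vect{Y}=\vect{X}^{1/\vect\nu}$. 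In each regime I would drive the stable factors to $\vect 1$ in probability while letting the (power of the) $\mbox{MPH}^*$ factor carry the approximation of $\mu$. Part (i) is then immediate: setting $\vect\alpha=\vect 1$ in \eqref{eq:joint-laplace} collapses the GMML transform to the $\mbox{MPH}^*$ transform \eqref{eq:Kulkarni}, so the class contains $\mbox{MPH}^*$ and in particular the dense MAP subclass \eqref{eq:dens-map}. (Equivalently one keeps $\alpha_i<1$ and lets $\alpha_i\uparrow 1$, whence $\vect{S}_{\vect\alpha}\Rightarrow\vect 1$ since $\exp(-u^{\alpha_i})\to\exp(-u)$, and $\vect{W}^{1/\vect\alpha}\to\vect{W}$.)

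For (ii), fix $\vect\alpha$ and write $Y_i=W_i^{1/(\alpha_i\nu_i)}\,S_{\alpha_i}^{1/\nu_i}$. I would take $\nu_i^{(m)}\to\infty$, so that $S_{\alpha_i}^{1/\nu_i^{(m)}}\to 1$ almost surely, as $S_{\alpha_i}\in(0,\infty)$ a.s.\ and the exponent tends to $0$. For each fixed $m$ the map $\vect w\mapsto\vect w^{1/(\vect\alpha\bullet\vect\nu^{(m)})}$ is a homeomorphism of $\mathbb{R}_+^n$, hence the pushforward of the weakly dense MAP-based laws under it is again weakly dense among all laws on $\mathbb{R}_+^n$; thus one may choose $\vect{W}^{(m)}\sim\mbox{MPH}^*$ with $(\vect{W}^{(m)})^{1/(\vect\alpha\bullet\vect\nu^{(m)})}$ within $1/m$ of $\mu$ in the Prokhorov metric. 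Letting $m\to\infty$ and applying Slutsky's theorem to the componentwise product of a weakly convergent vector with one converging in probability to $\vect 1$ gives $\vect{Y}^{(m)}\Rightarrow\vect{Z}$.

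For (iii), fix the marginal indices $\vect\beta=\vect\alpha\bullet\vect\nu$, so that $Y_i=W_i^{1/\beta_i}\,S_{\alpha_i}^{\alpha_i/\beta_i}$ with the exponent $1/\beta_i$ now independent of the limit. I would fix once and for all a sequence $\vect{W}^{(k)}\sim\mbox{MPH}^*$ with $\vect{W}^{(k)}\Rightarrow\vect{Z}^{\vect\beta}$, possible by denseness of \eqref{eq:dens-map}, so that $(\vect{W}^{(k)})^{1/\vect\beta}\Rightarrow\vect{Z}$ by the continuous mapping theorem with the fixed map $\vect w\mapsto\vect w^{1/\vect\beta}$. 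Then I would let $\alpha_i^{(k)}\uparrow 1$ (equivalently $\nu_i^{(k)}=\beta_i/\alpha_i^{(k)}\downarrow\beta_i$); since $S_{\alpha}\Rightarrow\delta_1$ as $\alpha\uparrow 1$ and $\alpha^{(k)}/\beta_i\to 1/\beta_i$, the factor $S_{\alpha_i^{(k)}}^{\alpha_i^{(k)}/\beta_i}\to 1$ in probability, and Slutsky again yields $\vect{Y}^{(k)}\Rightarrow\vect{Z}$. Every approximant has $\alpha_i<1$, hence is a genuine member of the class with the prescribed regularly varying marginals of index $\beta_i$.

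The main obstacle is conceptual rather than computational: one must identify, in each regime, the correct parameter to push to the boundary so that the stable vector degenerates to $\vect 1$ without crippling the approximating power of the $\mbox{MPH}^*$ factor. The delicate point is (ii), where sending $\nu_i\to\infty$ also forces the exponent $1/(\alpha_i\nu_i)$ on $W_i$ to $0$; the resolution is that for each fixed $m$ the relevant power map is a homeomorphism, so denseness of the pushforward is automatic and no uniform metric control of the degenerating map is required. Verifying the two stable limits, namely $S_\alpha^{1/\nu}\to 1$ as $\nu\to\infty$ and $S_\alpha\Rightarrow\delta_1$ as $\alpha\uparrow 1$, together with the componentwise Slutsky step then completes all three parts.
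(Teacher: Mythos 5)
Your proposal is correct and follows essentially the same route as the paper: part (i) by specializing $\vect{\alpha}\equiv\vect{1}$ to recover the dense $\mbox{MPH}^*$ class, and parts (ii) and (iii) by using the product representation of Theorem \ref{repPHstable}, degenerating the stable factor to $\vect{1}$ (via $\nu\to\infty$ or $\alpha\uparrow 1$, respectively) while the power of the $\mbox{MPH}^*$ factor carries the approximation, then concluding with Slutsky and the continuous mapping theorem. Your explicit justification in (ii) that the power map is a homeomorphism (so the pushforward of a weakly dense family stays dense) merely spells out a step the paper leaves implicit.
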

\begin{proof}
(i) The statement is evident by noticing that we may choose $\vect{\alpha}\equiv\vect{1}$ and recalling that the class of variables with Laplace transform \eqref{eq:Kulkarni} is dense on $\mathbb{R}_+^n$. 

(ii) Let $\vect{0}<\vect{\nu}_1<\vect{\nu}_2<\cdots$ be any increasing and (entry-wise) diverging sequence of vectors and $Y$ be an arbitrary random vector on $\mathbb{R}_+^n$. Let $\vect{S_\alpha}$ be as in Theorem \ref{repPHstable} and notice that $\vect{S_\alpha}^{1/\vect{\nu}_n}\to\vect{1}$. In particular $\vect{S_\alpha}^{1/\vect{\nu}_n}\stackrel{d}{\to}\vect{1}$. Moreover, we may choose an independent sequence of vectors $\vect{W}_n$ with Laplace transforms of the form \eqref{eq:Kulkarni} such that $\vect{W}_n^{1/\vect{\nu}_n}\stackrel{d}{\to}Y$. Applying the continuous mapping theorem, and by the characterization of Theorem \ref{repPHstable}, the statement follows.

(iii) Similar to the previous case, let $\vect{0}<\vect{\alpha}_1<\vect{\alpha}_2<\cdots$ be an increasing sequence of vectors, converging to $\vect{1}$, and set $\vect{\nu}_n=(\vect{\gamma}\bullet\vect{\alpha}_n)^{-1}$. With $\vect{S_\alpha}$ as in Theorem \ref{repPHstable} we have that $\vect{S}_{\vect{\alpha}_n}^{1/\vect{\nu}_n}\stackrel{d}{\to}\vect{1}$. Choosing an independent sequence of vectors $\vect{W}_n$ with Laplace transforms of the form \eqref{eq:Kulkarni} and with $\vect{W}_n^{1/\vect{\nu}_n}\stackrel{d}{\to}Y$, the proof is finished as before.
\end{proof}

\begin{remark}\normalfont
{The above result shows how several classes of multivariate Mittag-Leffler distributions and their power transforms are dense in the set of all distributions of the $n$-dimensional positive orthant. However, since we are dealing with a tail-independent model, the number of phases increases drastically when faced with the need to capture dependence above high thresholds. Heuristically, the tail dependence is only correctly modelled in the limit. This is in some way analogous to the fact that phase--type distributions are dense on all distributions on the positive real line, but they are all light-tailed (of exponential decay), and very large dimensions are needed for approximations of heavy-tailed distributions, cf.\ \citet{bladt2017matrix}.}
\end{remark}

\section{Special structures and examples}\label{sec:examples}
From the previous sections, it becomes clear that the tail behavior of the GMML class is determined by the parameters $\alpha_i$ (cf.\ Remark \ref{rem48}) and the dependence structure is mainly triggered by the {parameters of the reward matrix $ \mat{R}$}, as these determine joint contributions to the size of each component. The marginal behavior and overall shape in the body of the distribution is then finally implied by the structure of the phase-type components ($\vect{\pi},\mat{T}$). In particular, the dimension $p$ of the latter also determines the potential for possible multimodalities of the components. 
In fact, Theorem \ref{th410} on the denseness of $\mbox{GMML}^{1/\nu}$ distributions on ${\mathbb R}_+^n$ relies (implicitly in part (i)) on the possibility of having arbitrarily large dimension $p$, a flexibility that is needed for modelling multiple modes, as the latter can require many phases. However, due to the possibly complex interaction of all parameters, one can not uniquely assign the role of each of the parameters to achieve a particular distributional behavior or shape. Moreover, for arbitrary combinations of parameters it is not always possible to get an explicit expression for the density of a GMML distribution (a complication inherited from the phase-type distributions). \\

We now proceed to give an example of a subclass that, however, does allow an explicit form. To that end, consider the special structure \eqref{feed-forward_structure1} and \eqref{feed-forward_structure2} for $(\vect{\pi},\mat{T},\mat{R})$, which in the exponential case led to the density 
 \eqref{eq:dens-map},
 \[ f(x_1,...,x_n;{\vect{\pi},\mat{T},\mat{R}})= \vect{\pi}e^{\mat{C}_1x_1}\mat{D}_1 e^{\mat{C}_2x_2}\mat{D}_2\cdots \mat{D}_{n-1}e^{\mat{C}_n x_n}\mat{D}_n\vect{e}  . \]
This choice of $(\vect{\pi},\mat{T},\mat{R})$, when plugged into \eqref{eq:joint-laplace}, results in 
 the joint Laplace transform  of $\vect{X}\sim \mbox{GMML}(\vect{\alpha},\vect{\pi},\mat{T},\mat{R})$  
 \begin{equation}
  L_X(\vect{u};{\vect{\theta}})=\vect{\beta}
 \begin{pmatrix}
 u_1^{\alpha_1}\mat{I}-\mat{C}_1 & -\mat{D}_1 & \mat{0} & \cdots & \mat{0} \\
 \mat{0} & u_2^{\alpha_2}\mat{I}-\mat{C}_2 & -\mat{D}_2 & \cdots & \mat{0} \\
 \mat{0} & \mat{0} & u_3^{\alpha_3}\mat{I}-\mat{C}_3 & \cdots & \mat{0} \\
 \vdots & \vdots & \vdots & \vdots\vdots\vdots & \vdots \\
 \vect{0} & \vect{0} & \vect{0} & \cdots &  u_n^{\alpha_n}\mat{I}-\mat{C}_n
 \end{pmatrix}^{-1}\!\!\!\!
 \begin{pmatrix}
 \vect{0} \\
 \vect{0} \\
 \vect{0} \\
 \vdots \\
 \mat{D}_n\vect{e} 
 \end{pmatrix},\label{thi} 
 \end{equation}
{ where we now use the shorthand notation $\vect{\theta}=(\vect{\alpha},\vect{\pi},\mat{T},\mat{R})$.}
For the resulting class of GMML distributions we can derive joint and marginal density functions, but first we notice the following lemma.  

\begin{lemma}\label{lemma:int-of-ML}
\[  \int_0^\infty x^{\alpha-1}\mbox{E}_{\alpha,\alpha}(\mat{T}x^\alpha)\dd x = -\mat{T}^{-1}.  \]
\end{lemma}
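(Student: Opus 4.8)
The plan is to evaluate the integral $\int_0^\infty x^{\alpha-1}E_{\alpha,\alpha}(\mat{T}x^\alpha)\,\dd x$ by relating it to the density of the MML distribution, whose total mass is known to be one. The key observation is that the integrand, up to a factor, is precisely the density of a matrix Mittag--Leffler random variable as given earlier in the excerpt: recall that for $X\sim\mbox{MML}(\alpha,\vect{\pi},\mat{T})$ the density is $f(x)=x^{\alpha-1}\vect{\pi}E_{\alpha,\alpha}(\mat{T}x^\alpha)\vect{t}$, and it integrates to one. Since this holds for \emph{every} admissible initial vector $\vect{\pi}$, the scalar identity $\int_0^\infty x^{\alpha-1}\vect{\pi}E_{\alpha,\alpha}(\mat{T}x^\alpha)\vect{t}\,\dd x=1=\vect{\pi}(-\mat{T}^{-1})\vect{t}$ (using $\vect{t}=-\mat{T}\vect{e}$ and $(-\mat{T}^{-1})(-\mat{T}\vect{e})=\vect{e}$, so $\vect{\pi}\vect{e}=1$) suggests the matrix-valued statement after stripping off $\vect{\pi}$ and $\vect{t}$.

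To make this rigorous without relying on a probabilistic total-mass argument (which only yields the scalar contraction $\vect{\pi}(\cdot)\vect{t}$ rather than the full matrix identity), I would proceed by termwise integration of the series definition. Writing $E_{\alpha,\alpha}(\mat{T}x^\alpha)=\sum_{k=0}^\infty \mat{T}^k x^{\alpha k}/\Gamma(\alpha k+\alpha)$ and integrating term by term,
\begin{align*}
\int_0^\infty x^{\alpha-1}E_{\alpha,\alpha}(\mat{T}x^\alpha)\,\dd x
&=\sum_{k=0}^\infty \frac{\mat{T}^k}{\Gamma(\alpha k+\alpha)}\int_0^\infty x^{\alpha k+\alpha-1}\,\dd x,
\end{align*}
one sees immediately that each scalar integral $\int_0^\infty x^{\alpha k+\alpha-1}\,\dd x$ diverges, so a naive termwise approach fails and this will be the main obstacle.

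The cleaner route, which I would adopt, is the \emph{functional calculus} / Laplace-transform argument used already in the proof of Lemma~\ref{lemma:MAP-structure}. For a scalar $-t>0$ the classical identity $\int_0^\infty x^{\alpha-1}E_{\alpha,\alpha}(t x^\alpha)\,\dd x = -1/t$ holds (this is the $s\to 0$ limit of the Laplace transform $\int_0^\infty e^{-sx}x^{\alpha-1}E_{\alpha,\alpha}(tx^\alpha)\,\dd x=1/(s^\alpha-t)$, evaluated at $s=0$, valid since $\mbox{Re}(t)<0$). The function $t\mapsto -1/t$ is analytic on a neighbourhood of the spectrum of $\mat{T}$, all of whose eigenvalues have strictly negative real part by the phase--type assumption. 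By the functional calculus argument (Theorem~3.4.4 of \citet{bladt2017matrix}, invoked identically in Lemma~\ref{lemma:MAP-structure}), the scalar identity lifts to the matrix identity $\int_0^\infty x^{\alpha-1}E_{\alpha,\alpha}(\mat{T}x^\alpha)\,\dd x=-\mat{T}^{-1}$.

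The one point requiring care is justifying that the Bochner (matrix-valued) integral converges and commutes with the analytic functional calculus; this is where the strictly negative real parts of the eigenvalues of $\mat{T}$ are essential, guaranteeing the exponential-type decay of $E_{\alpha,\alpha}(\mat{T}x^\alpha)$ as $x\to\infty$ needed for absolute convergence of the integral. Equivalently, one may represent $E_{\alpha,\alpha}(\mat{T}x^\alpha)$ via Cauchy's formula over a contour $\gamma$ enclosing the spectrum of $\mat{T}$ (as given in the excerpt), interchange the $x$-integral with the contour integral by Fubini, apply the scalar result $\int_0^\infty x^{\alpha-1}E_{\alpha,\alpha}(zx^\alpha)\,\dd x=-1/z$ inside, and finally recognise $\frac{1}{2\pi\ii}\int_\gamma (-1/z)(z\mat{I}-\mat{T})^{-1}\,\dd z=-\mat{T}^{-1}$ by the residue theorem. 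I expect the interchange-of-integrals justification to be the only genuinely technical step.
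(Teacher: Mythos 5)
Your proposal is correct and ultimately takes the same route as the paper: represent $E_{\alpha,\alpha}(\mat{T}x^\alpha)$ by Cauchy's formula, interchange the $x$- and contour integrals, insert the scalar identity $\int_0^\infty x^{\alpha-1}E_{\alpha,\alpha}(zx^\alpha)\,\dd x=-1/z$, and conclude by the residue theorem/functional calculus. The only cosmetic difference is that the paper obtains the scalar identity from the normalization of the Mittag--Leffler density rather than from the $s\to 0$ limit of its Laplace transform, and it leaves the Fubini justification implicit just as you flag it as the remaining technical step.
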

\begin{proof}
Since $\lambda\rightarrow \lambda x^{\alpha-1} {E}_{\alpha,\alpha}(-\lambda x^{\alpha})$ is an analytic function, and a density as a function of $x$, we get that
\begin{eqnarray*}
 \int_0^\infty x^{\alpha-1}{E}_{\alpha,\alpha}(\mat{T}x^\alpha)\dd x&=&
  \int_0^\infty x^{\alpha-1}\frac{1}{2\pi\ii} \int_\gamma {E}_{\alpha,\alpha}(s x^\alpha)(s\mat{I}-\mat{T})^{-1}\dd s \dd x \\
  &=&\frac{1}{2\pi\ii} \int_\gamma \left( \int_0^\infty x^{\alpha-1} {E}_{\alpha,\alpha}(s x^\alpha) \dd x \right)  (s\mat{I}-\mat{T})^{-1}\dd s \\
  &=& \frac{1}{2\pi\ii} \int_\gamma(-s^{-1})(s\mat{I}-\mat{T})^{-1}\dd s \\
  &=&-\mat{T}^{-1} .
\end{eqnarray*}
\end{proof}
\begin{remark}\rm 
The matrix $\mat{U}=-\mat{T}^{-1}$ is the so--called Green matrix which has the following  probabilistic interpretation: The element $(i,j)$ of $\mat{U}$ is the expected time that the Markov jump process underlying a phase--type distribution with generator $\mat{T}$ spends in state $j$ (prior to absorption) given that it starts in state $i$. 
\end{remark}

The main result of this section is as follows.
  \begin{theorem}
  The Laplace transform \eqref{thi} can equivalently be written as
  \begin{equation}
    L_X(\vect{u};{\vect{\theta}}) = \vect{\pi}\left( \prod_{i=1}^n (u_i^{\alpha_1}\mat{I}-\mat{C}_i)^{-1}\mat{D}_i\right) \vect{e} \label{eq:LPT-MAP},\quad { \vect{u}\in \mathbb{R}_+^n}.
  \end{equation}
The corresponding joint density is given by
\begin{equation}
   f_X(x_1,...,x_n;{\vect{\theta}}) = \vect{\pi}\left( \prod_{i=1}^n x_i^{\alpha_i-1}E_{\alpha_i,\alpha_i}(\mat{C}_i x_i^{\alpha_i})\mat{D}_i\right)\vect{e},\quad { x_i>0, \:\: i=1,\dots,n} .\label{eq:MAP} \end{equation}
   For the $i$'th marginal distribution of $X_i$ we have 
   \[  X_i \sim \mbox{MML}(\alpha_i,\vect{\beta}_i,\mat{C}_i)  \]
   where
   \[  \vect{\beta}_i = \vect{\pi}\prod_{j=1}^{i-1}(-\mat{C}_j)^{-1}\mat{D}_j .  \]
  \end{theorem}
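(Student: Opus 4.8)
The plan is to treat the three assertions in turn: the first is a matrix-algebra identity, the second a Laplace inversion, and the third a specialization obtained by setting coordinates to zero.

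First, for the equivalence of \eqref{thi} and \eqref{eq:LPT-MAP}, I would write $\mat{M}_i=u_i^{\alpha_i}\mat{I}-\mat{C}_i$ and observe that the matrix to be inverted in \eqref{thi} is block upper bidiagonal, with diagonal blocks $\mat{M}_i$ and superdiagonal blocks $-\mat{D}_i$. Because $\vect{\beta}=(\vect{\pi},\vect{0},\dots,\vect{0})$ selects the first block row and the right-hand vector $(\vect{0},\dots,\vect{0},\mat{D}_n\vect{e})'$ selects the last block column, only the $(1,n)$ block of the inverse is relevant. A short induction on $n$ (or direct verification that the proposed matrix is a one-sided inverse) shows that this block equals $\mat{M}_1^{-1}\mat{D}_1\mat{M}_2^{-1}\mat{D}_2\cdots\mat{M}_{n-1}^{-1}\mat{D}_{n-1}\mat{M}_n^{-1}$; multiplying on the left by $\vect{\pi}$ and on the right by $\mat{D}_n\vect{e}$ telescopes into $\vect{\pi}\prod_{i=1}^n\mat{M}_i^{-1}\mat{D}_i\vect{e}$, which is precisely \eqref{eq:LPT-MAP}.

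Next, I would establish \eqref{eq:MAP} by verifying that the claimed density has Laplace transform \eqref{eq:LPT-MAP}. The key ingredient is the matrix-valued identity $\int_0^\infty e^{-ux}x^{\alpha-1}E_{\alpha,\alpha}(\mat{C}x^\alpha)\dd x=(u^\alpha\mat{I}-\mat{C})^{-1}$ for $u\ge 0$; this is the scalar transform $\int_0^\infty e^{-ux}x^{\alpha-1}E_{\alpha,\alpha}(\lambda x^\alpha)\dd x=(u^\alpha-\lambda)^{-1}$ promoted to matrix argument by the same Cauchy-formula and functional-calculus argument used in Lemma \ref{lemma:int-of-ML} (indeed Lemma \ref{lemma:int-of-ML} is its $u=0$ instance) and in the proof of Lemma \ref{lemma:MAP-structure}. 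Since in the integrand of \eqref{eq:MAP} the variable $x_i$ appears only inside the $i$-th factor while the matrices $\mat{D}_i$ are constant, Fubini's theorem factorizes the $n$-fold integral, each $x_i$-integration produces the factor $(u_i^{\alpha_i}\mat{I}-\mat{C}_i)^{-1}$, and the result is exactly \eqref{eq:LPT-MAP}; uniqueness of the multivariate Laplace transform then identifies the density.

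Finally, for the marginal of $X_i$ I would set $u_j=0$ for every $j\neq i$ in \eqref{eq:LPT-MAP}. Since $0^{\alpha_j}=0$, each such factor collapses to $(-\mat{C}_j)^{-1}\mat{D}_j$, giving $L_{X_i}(u_i)=\vect{\beta}_i\,(u_i^{\alpha_i}\mat{I}-\mat{C}_i)^{-1}\mat{D}_i\prod_{j=i+1}^n(-\mat{C}_j)^{-1}\mat{D}_j\vect{e}$ with $\vect{\beta}_i=\vect{\pi}\prod_{j=1}^{i-1}(-\mat{C}_j)^{-1}\mat{D}_j$. The feed-forward constraint $-\mat{C}_j\vect{e}=\mat{D}_j\vect{e}$ yields $(-\mat{C}_j)^{-1}\mat{D}_j\vect{e}=\vect{e}$, so the trailing product applied to $\vect{e}$ collapses by backward induction to $\vect{e}$, whence $\mat{D}_i\prod_{j>i}(-\mat{C}_j)^{-1}\mat{D}_j\vect{e}=\mat{D}_i\vect{e}=-\mat{C}_i\vect{e}=\vect{c}_i$. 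This leaves $L_{X_i}(u_i)=\vect{\beta}_i(u_i^{\alpha_i}\mat{I}-\mat{C}_i)^{-1}\vect{c}_i$, the Laplace transform of $\mbox{MML}(\alpha_i,\vect{\beta}_i,\mat{C}_i)$, and the same identity applied from the right shows $\vect{\beta}_i\vect{e}=\vect{\pi}\vect{e}=1$, so $\vect{\beta}_i$ is a genuine initial distribution. The block inversion and the marginal collapse are routine, both hinging only on the telescoping relation $(-\mat{C}_j)^{-1}\mat{D}_j\vect{e}=\vect{e}$; the step requiring the most care is the density inversion, where one must justify interchanging the infinite ML series (equivalently the Cauchy contour integral) with the Laplace integral to obtain the matrix transform identity, and then invoke uniqueness of the multivariate Laplace transform, exactly the functional-calculus subtlety already met in Lemma \ref{lemma:MAP-structure}.
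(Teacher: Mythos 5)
Your proposal is correct and follows essentially the same route as the paper: the block bidiagonal inversion for \eqref{eq:LPT-MAP}, Fubini plus the matrix Laplace transform $\int_0^\infty e^{-ux}x^{\alpha-1}E_{\alpha,\alpha}(\mat{C}x^{\alpha})\dd x=(u^{\alpha}\mat{I}-\mat{C})^{-1}$ for \eqref{eq:MAP}, and the telescoping relation $(-\mat{C}_j)^{-1}\mat{D}_j\vect{e}=\vect{e}$ for the marginals. The only cosmetic differences are that the paper reduces to $n=2$ rather than inducting on $n$, and obtains the marginals by integrating the density via Lemma \ref{lemma:int-of-ML} rather than setting $u_j=0$ in the Laplace transform -- which, as you note, is the same identity evaluated at $u=0$.
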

  \begin{proof}
  It is sufficient to prove the result for $n=2$.
  \eqref{eq:LPT-MAP} follows from the general block diagonal inversion formula
  \[  \begin{pmatrix}
  \mat{A} & -\mat{B} \\
  \mat{0} & \mat{C}
  \end{pmatrix}^{-1} = \begin{pmatrix}
  \mat{A}^{-1} & \mat{A}^{-1}\mat{B}\mat{C}^{-1} \\
  \mat{0} & \mat{C}^{-1}
  \end{pmatrix}  . \]
  Concerning \eqref{eq:MAP}, we have that
  \begin{eqnarray*}
  \lefteqn{\int_0^\infty\int_0^\infty e^{-s_1x_1-s_2x_2}\vect{\pi} x_1^{\alpha_1} E_{\alpha_1,\alpha_1}(\mat{C}_1 x_1^{\alpha_1} )\mat{D}_1  x_2^{\alpha_2} E_{\alpha_2,\alpha_2}(\mat{C}_2 x_2^{\alpha_2} ) \mat{D}_2\vect{e} \dd x_1 \dd x_2 }~~~~ \\
&=&\int_0^\infty  e^{-s_1x_1}   x_1^{\alpha_1} \vect{\pi}E_{\alpha_1,\alpha_1}(\mat{C}_1 x_1^{\alpha_1} )\dd x_1 \mat{D}_1  \int_0^\infty e^{-s_2x_2}x_2^{\alpha_2} E_{\alpha_2,\alpha_2}(\mat{C}_2 x_2^{\alpha_2} ) \mat{D}_2\vect{e} \dd x_2 \\
&=&\vect{\pi}(u_1^{\alpha_1}\mat{I}-\mat{C}_1)^{-1}\mat{D}_1 (u_2^{\alpha_2}\mat{I}-\mat{C}_2)^{-1}\mat{D}_2\vect{e} \\
&=&(\vect{\pi},\vect{0})
\begin{pmatrix}
u_1^{\alpha_1}\mat{I}-\mat{C}_1 & -\mat{D}_1 \\
\mat{0} & u_2^{\alpha_2}\mat{I}-\mat{C}_2
\end{pmatrix}^{-1}
\begin{pmatrix}
\vect{0} \\
\mat{D}_2\vect{e}
\end{pmatrix},
  \end{eqnarray*}
  which is of the form \eqref{eq:joint-laplace}.

  The result on the marginal distributions follow from Lemma \ref{lemma:int-of-ML} and by using that 
  $(\mat{C}_i+\mat{D}_i)\vect{e}=\vect{0}$, { implying} that $(-\mat{C}_i)^{-1}\mat{D}_i\vect{e}=\vect{e}$.
  \end{proof}
The previous result can be used in the construction of bivariate (or multivariate) Mittag--Leffler distributions of a reasonably general type. 
  \begin{example}[Bivariate Mittag--Leffler distribution]\label{ex:5.1}\rm  {\ }\\
In this example we construct a class of bivariate distributions with Mittag--Leffler distributed marginals. The starting point is the construction of a bivariate exponential distribution underlying the MML. For details on this construction we refer to 
Section 8.3.2 of \citet{bladt2017matrix}. { Let $m$ be a positive integer and}
\[   \boldsymbol{S}=\left(\begin{array}{cccccc}{-{m} \lambda} & {({m}-1) \lambda} & {0} & {\dots} & {0} & {0} \\ {0} & {-({m}-1) \lambda} & {({m}-2) \lambda} & {\dots} & {0} & {0} \\ {0} & {0} & {-({m}-2) \lambda} & {\dots} & {0} & {0} \\ {\vdots} & {\vdots} & {\vdots} & {\ddots}  {\ddots} & {\vdots} & {\vdots} \\ {0} & {0} & {0} & {\dots} & {-2 \lambda} & {\lambda} \\ {0} & {0} & {0}  & {\cdots} & {0} & {-\lambda}\end{array}\right) . \]
Then for any initial distribution $\vect{\pi}=(\pi_1,...,{\pi_m})$, the phase--type distribution $\mbox{PH}(\vect{\pi},\mat{S})$ is simply an exponential distribution with intensity $\lambda$. Similarly, if we let
\[   \tilde{\boldsymbol{S}}=\left(\begin{array}{cccccc}{-\mu} & {\mu} & {0} & {\ldots} & {0} & {0} \\ {0} & {-2 \mu} & {2 \mu} & {\ldots} & {0} & {0} \\ {0} & {0} & {-3 \mu} & {\ldots} & {0} & {0} \\ {\vdots} & {\vdots} & {\vdots} & {\ddots}{\ddots} & {\vdots} & {\vdots} \\ {0} & {0} & {0} & {\cdots} & {-({m}-1) \mu} & {({m}-1) \mu} \\ {0} & {0} & {0} & {\ldots} & {0} & {-{m} \mu}\end{array}\right)  \]
and $\tilde{\vect{\pi}} = \frac{1}{{m}}\vect{e} =\left( \frac{1}{{m}},...,\frac{1}{{m}}  \right)$, then $\mbox{PH}(\tilde{\vect{\pi}},\tilde{\mat{S}})$ is again exponentially distributed with intensity $\mu$. Let $\mat{P}$ be a doubly stochastic matrix, i.e. its elements are non--negative and
\[  \mat{P}\vect{e}=\vect{e} \ \ \mbox{and} \ \ \vect{e}^\prime \mat{P}=\vect{e}^\prime , \]
and define
\[   \mat{T}  =  \begin{pmatrix}
\mat{S} & \lambda \mat{P}  \\
\mat{0} & \tilde{\mat{S}}
\end{pmatrix}  . \]
Consider the reward matrix 
\[  \mat{R}  = \begin{pmatrix}
\vect{e} & \vect{0} \\
\vect{0} & \vect{e}
\end{pmatrix} . \]
Then $\mbox{MPH}^*(\vect{e}_1^\prime,\mat{T},\mat{R})$ is a bivariate exponential distribution. This class of bivariate exponential distributions is capable of achieving any feasible correlation (ranging from $1-\pi^2/6$ to $1$) by choosing ${m}$ sufficiently large and $\mat{P}$ adequately (see \citet{biv1}). 
 Independence is achieved for
 \[  \mat{P} =  \frac{1}{{m}}\mat{E}, \]
 where $\mat{E}=\{ 1 \}_{i,j=1,...,{m}}$ is the matrix of ones, maximum negative (minimum) correlation (up to order ${m}$) by 
 \[  \mat{P}= \mat{I}  \]
 and maximum positive correlation for order up to ${m}$ by
 \[  \mat{P} = \{ \delta_{i,{m}-i+1} \}  , \]
 which is the anti--diagonal unit matrix, cf.\ \citet{Qi-Ming-biv-exp}. 

The correponding $\mbox{GMML}(\vect{\alpha},\vect{\pi},\mat{T},\mat{R})$ then has a density $f$ of the form
 \begin{equation}
  f(x_1,x_2;{\vect{\theta}}) = {m} \lambda \mu  x_1^{\alpha_1-1}x_2^{\alpha_2-1}  \vect{e}_1^\prime \mbox{E}_{\alpha_1,\alpha_1}(\mat{S}x_1^{\alpha_1})\mat{P}   \mbox{E}_{\alpha_2,\alpha_2}(\tilde{\mat{S}}x_2^{\alpha_2}) \vect{e}_n, \quad { x_1,x_2>0}, \label{eq:2-dim-gen}
  \end{equation}
where as usual $\vect{e}_i$ denotes the $i$'th Euclidian unit vector. The marginals are Mittag--Leffler distributions with densities
\[  f_{X_1}(x;{\alpha_1,\lambda}) = \lambda x^{\alpha_1-1}{E}_{\alpha_1,\alpha_1}(-\lambda x^{\alpha_1-1}) \ \ \mbox{and} \ \ f_{X_2}(x;{\alpha_2,\mu}) = \mu x^{\alpha_2-1}{E}_{\alpha_2,\alpha_2}(-\mu x^{\alpha_2-1}) , \]
{for $x>0$}, which follows directly from the invariance under different representations (parametrisations), or by simple integration and using Lemma \ref{lemma:int-of-ML}. Note that the present dependence structure has a very natural interpretation as a copula constructed in terms of combining marginal order statistics, cf.\ \citet{baker} and \cite[Sec.8.3.2]{bladt2017matrix}, here for Mittag-Leffler marginals. 

We can write the expression \eqref{eq:2-dim-gen} slightly more explicit. The eigenvalues of $\mat{S}$ are $-{m}\lambda$, $-({m}-1)\lambda$,..., $-\lambda$. To the eigenvalue $-\lambda {k}$ there corresponds an eigenvector $\vect{v}^{({k})}=(v_1^{({k})},...,v_n^{({k})})$ with
\begin{eqnarray*}
 v_1^{({k})}&=&1 \\
 v_{{i}+1}^{({k})}&=&\left( 1 - \frac{{k}-1}{{m}-{i}}  \right)v_{{i}}^{({k})}, \ \ {i}=1,...,{m}-1.
 \end{eqnarray*} 
 Similarly, $\tilde{\mat{S}}$ has eigenvalues $-\mu {m}, -\mu ({m}-1),...,-\mu$ and to the eigenvalue $-{k}\mu$ there corresponds an eigenvector $\vect{w}^{({k})}$ with
 \begin{eqnarray*}
 w_1^{({k})}&=&1 \\
 w_{{i}+1}^{({k})}&=&\left( 1 - \frac{{k}}{{i}}  \right)w_{{i}}^{({k})}, \ \ {i}=1,...,{m}-1.
 \end{eqnarray*} 
 Considering $\vect{v}^{({k})}$ and $\vect{w}^{({k})}$ as column vectors, we
 form the matrices $\mat{V}=(\vect{v}^{(1)},...,\vect{v}^{({m})})$ and 
 $\mat{W}=(\vect{w}^{(1)},...,\vect{w}^{({m})})$. Then we may write
 \begin{eqnarray*}
 {E}_{\alpha_1,\alpha_1}(\mat{S}x^{\alpha_1})&=&
 \mat{V} 
\mat{\Delta}\left( {E}_{\alpha_1,\alpha_1}(-{m}\lambda x^{\lambda_1}),..., {E}_{\alpha_1,\alpha_1}(-\lambda x^{\alpha_1})  \right)
   \mat{V}^{-1}, \\
   {E}_{\alpha_2,\alpha_2}(\tilde{\mat{S}}x^{\alpha_2})&=&\mat{W}
   \mat{\Delta}\left( {E}_{\alpha_2,\alpha_2}(-{m}\mu x^{\alpha_2}),..., {E}_{\alpha_1,\alpha_1}(-\mu x^{\alpha_2})  \right)
   \mat{W}^{-1} .
 \end{eqnarray*}
 Though the correlation between the Mittag--Leffler marginals is not defined (since moments of orders larger than $\alpha$ do not exist), some notion of dependence may be appreciated from the correlation structure of the underlying phase--type distribution.
 
In Figure \ref{bml.fig1} we depict a bivariate Mittag-Leffler density along with simulated data for the parameters  $\vect{\alpha}=(0.6,0.7)$, ${m}=20$, $\lambda=1$, $\mu=2$, and $\mat{P}$ the identity matrix.

In Figure \ref{bml.fig2} we use the same parameters but with $\mat{P}$ being the counter-identity matrix. As expected, the sign of the log-correlation is determined by the structure of the latter matrix. { Notice that the number of effective parameters corresponding to each of the two proposed structures is five.}

\begin{figure}[hh]
\centering
\includegraphics[width=7cm,trim=8cm 11cm 5cm .5cm,clip]{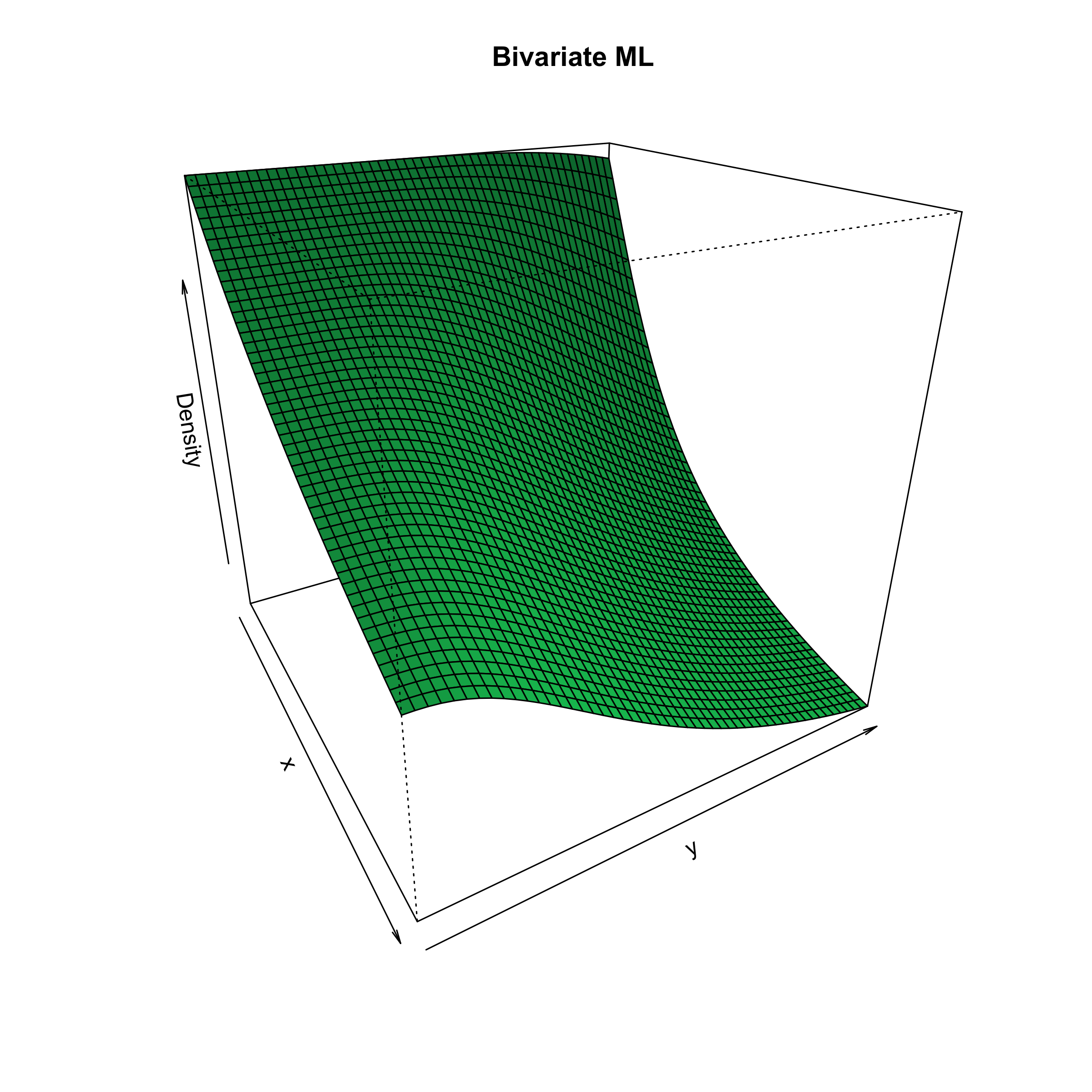}
\includegraphics[width=7cm,trim=.5cm .5cm .5cm .5cm,clip]{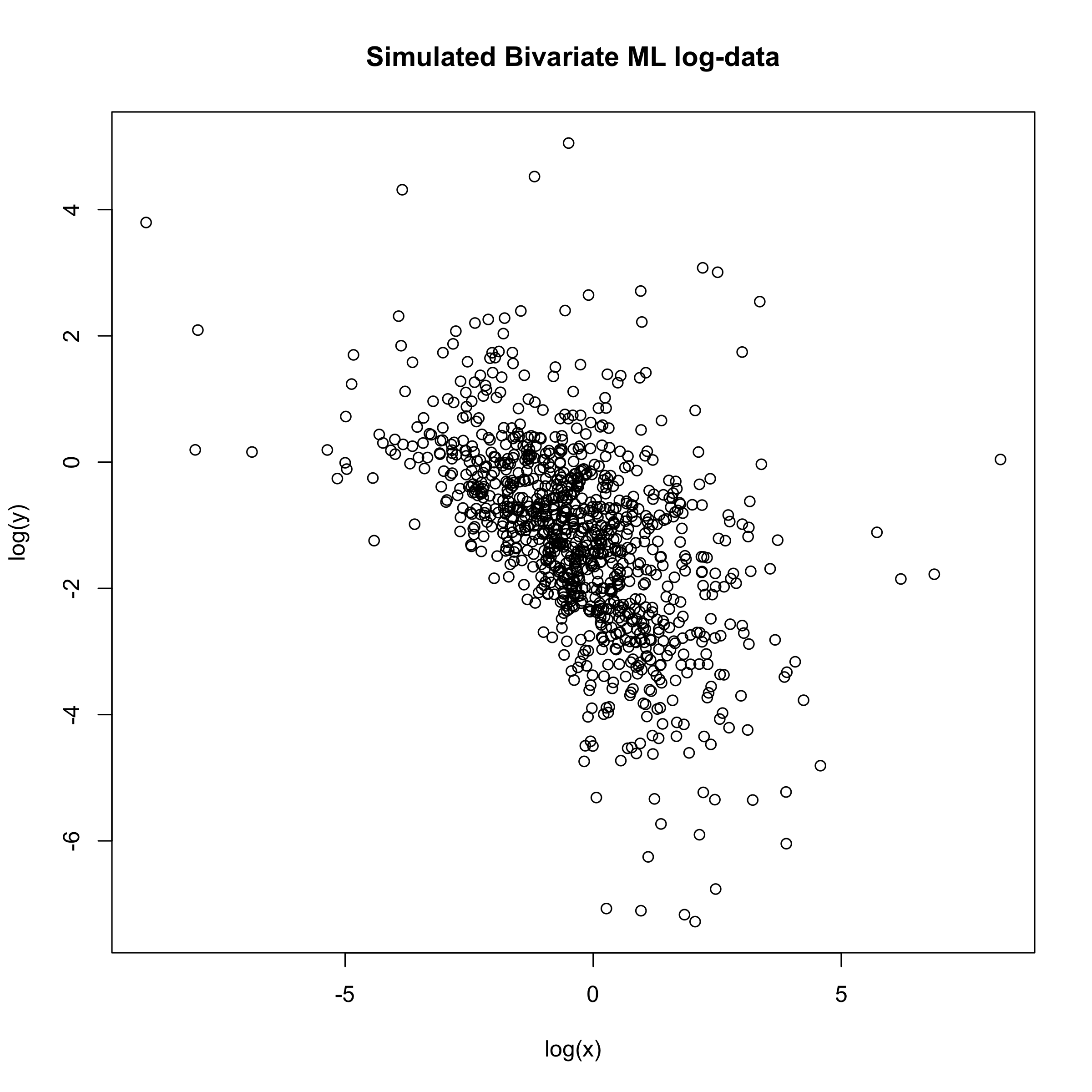}
\caption{Density and $1000$ simulated data-points from a bivariate ML distribution with negative log-correlation (empirical correlation of $-0.53$).} 
\label{bml.fig1}
\end{figure}

\begin{figure}[hh]
\centering
\includegraphics[width=7cm,trim=8cm 11cm 5cm .5cm,clip]{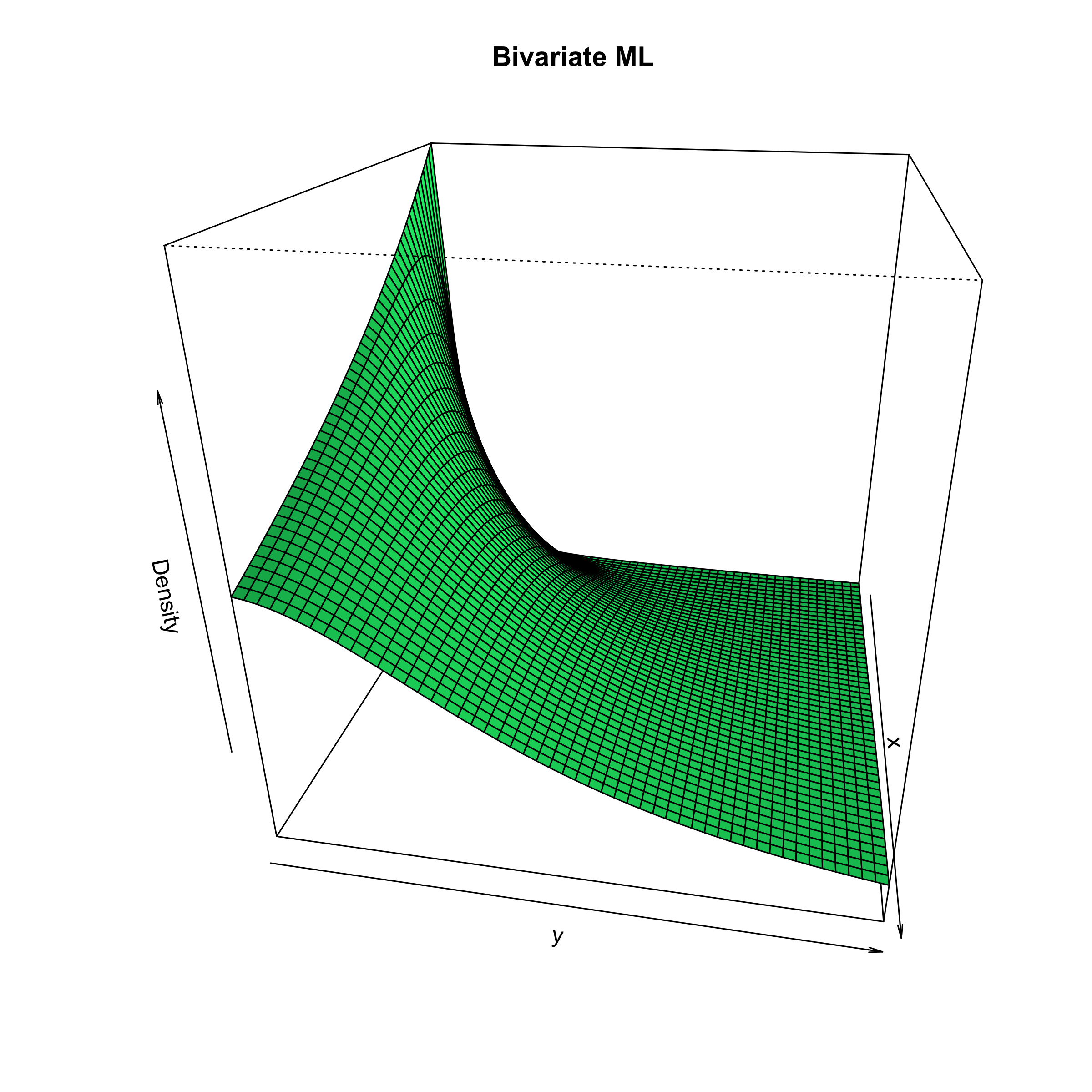}
\includegraphics[width=7cm,trim=.5cm .5cm .5cm .5cm,clip]{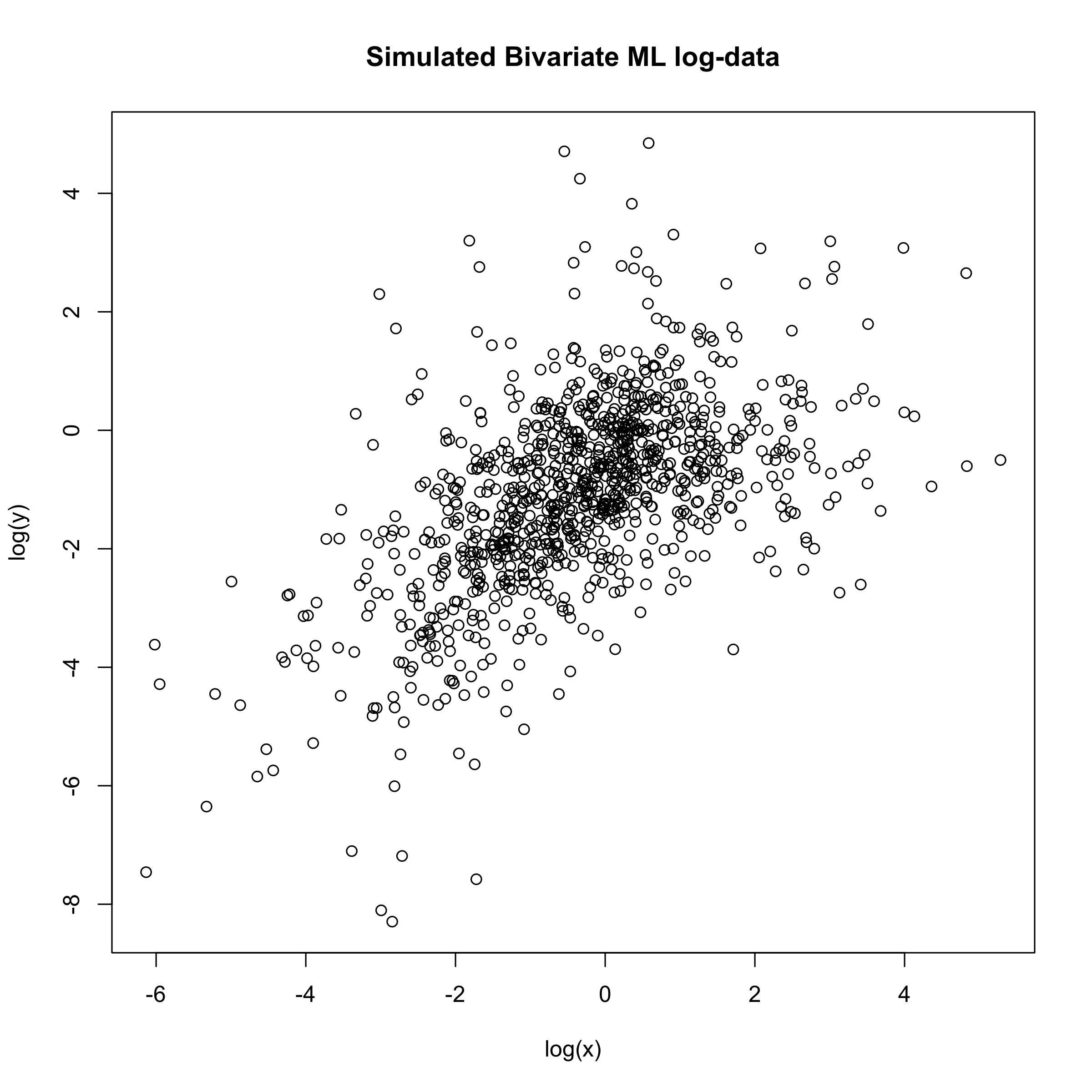}
\caption{Density and $1000$ simulated data-points from a bivariate ML distribution with positive correlation (empirical correlation of $0.55$).} 
\label{bml.fig2}
\end{figure}
 \qed
\end{example}

Concerning the power MML with this structure we have the following result. 
\begin{theorem}
Assume that $\vect{X}$ has joint density \eqref{eq:MAP}. Then $\vect{Y}=\vect{X}^{1/\vect{\nu}}$ has the joint density 
\[  f_Y(x_1,...,x_n;{\vect{\nu},\vect{\theta}}) = \vect{\pi}\left( \prod_{i=1}^n \nu_i x_i^{\alpha_i\nu_i-1}E_{\alpha_i,\alpha_i}(\mat{C}_i x_i^{\alpha_i\nu_i})\mat{D}_i\right)\vect{e} ,\quad {x_i>0,\:\: i=1,\dots ,n}, \]
and joint moments 
%\begin{eqnarray*}\lefteqn{
%\Exp \left( Y_1^{\theta_1}Y_2^{\theta_2}\cdots Y_n^{\theta_n}  \right)
%=\vect{\pi}
%\left( \prod_{i=1}^n \frac{\Gamma (1-\theta_i/(\nu_i \alpha_i))\Gamma(1+\theta_i/(\nu_i\alpha_i))}{\Gamma (1-\theta_i/\nu_i)}(-\mat{C}_i)^{-\theta_i/\nu_i\alpha_i-1} \mat{D}_i\right)\vect{e}}~~~~~ \\
%&=& \prod_{i=1}^n \left( \frac{\Gamma (1-\theta_i/(\nu_i \alpha_i))\Gamma(1+\theta_i/(\nu_i\alpha_i))}{\Gamma (1-\theta_i/\nu_i)}\right)\vect{\pi} \left( \prod_{i=1}^n (-\mat{C}_i)^{-\theta_i/\nu_i\alpha_i-1} \mat{D}_i\right)\vect{e} ,
%\end{eqnarray*}
\begin{multline*}
		\Exp \left( Y_1^{\theta_1}Y_2^{\theta_2}\cdots Y_n^{\theta_n}  \right)\\
	= \prod_{i=1}^n \left( \frac{\Gamma (1-\theta_i/(\nu_i \alpha_i))\Gamma(1+\theta_i/(\nu_i\alpha_i))}{\Gamma (1-\theta_i/\nu_i)}\right)\vect{\pi} \left( \prod_{i=1}^n (-\mat{C}_i)^{-\theta_i/\nu_i\alpha_i-1} \mat{D}_i\right)\vect{e} ,
\end{multline*}
where  $\nu_i\alpha_i >\theta_i>0$, for $i=1,2,...,n$.
\end{theorem}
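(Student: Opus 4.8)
The plan is to treat the density and the joint moments separately, obtaining the former by a direct change of variables and the latter by exploiting the product representation of Theorem \ref{repPHstable}.

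For the density I would start from the joint density \eqref{eq:MAP} of $\vect{X}$ and apply the coordinate-wise map $y_i=x_i^{1/\nu_i}$, equivalently $x_i=y_i^{\nu_i}$. Since the transformation acts separately on each coordinate, its Jacobian is the diagonal product $\prod_{i=1}^n \nu_i y_i^{\nu_i-1}$, so that $f_Y(\vect{y})=f_X(y_1^{\nu_1},\dots,y_n^{\nu_n})\prod_{i=1}^n \nu_i y_i^{\nu_i-1}$. Substituting $x_i=y_i^{\nu_i}$ into \eqref{eq:MAP} turns the scalar prefactor $x_i^{\alpha_i-1}$ into $y_i^{\nu_i(\alpha_i-1)}$ and the matrix argument $\mat{C}_i x_i^{\alpha_i}$ into $\mat{C}_i y_i^{\nu_i\alpha_i}$. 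The only point requiring (mild) care is that $x_i^{\alpha_i-1}$ and the Jacobian factors $\nu_i y_i^{\nu_i-1}$ are scalars, hence commute with the matrices $E_{\alpha_i,\alpha_i}(\cdot)$ and $\mat{D}_i$ and may be pulled through the matrix product; collecting powers via $\nu_i y_i^{\nu_i-1}\,y_i^{\nu_i(\alpha_i-1)}=\nu_i y_i^{\nu_i\alpha_i-1}$ yields exactly the claimed expression for $f_Y$, after renaming $y_i$ back to $x_i$.

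For the joint moments I would use $Y_i=X_i^{1/\nu_i}$, so $Y_i^{\theta_i}=X_i^{\theta_i/\nu_i}$, and invoke Theorem \ref{repPHstable}: $\vect{X}\stackrel{d}{=}\vect{W^{1/\alpha}}\bullet\vect{S_\alpha}$, where $\vect{W}\sim\mbox{MPH}^*(\vect{\pi},\mat{T},\mat{R})$ inherits the feed-forward structure (hence has density \eqref{eq:dens-map}) and $\vect{S_\alpha}$ has independent stable components with Laplace transform $\exp(-u^{\alpha_i})$, independent of $\vect{W}$. This gives $Y_i^{\theta_i}=W_i^{\theta_i/(\nu_i\alpha_i)}\,S_{\alpha_i}^{\theta_i/\nu_i}$, and the independence of $\vect{W}$ from $\vect{S_\alpha}$ together with the mutual independence of the $S_{\alpha_i}$ yields
\[ \Exp\Big(\prod_{i=1}^n Y_i^{\theta_i}\Big)=\Exp\Big(\prod_{i=1}^n W_i^{\theta_i/(\nu_i\alpha_i)}\Big)\prod_{i=1}^n\Exp\big(S_{\alpha_i}^{\theta_i/\nu_i}\big). \]
The first factor is a joint fractional moment of the feed-forward phase-type vector $\vect{W}$, so Lemma \ref{lemma:MAP-structure} with exponents $\theta_i/(\nu_i\alpha_i)$ supplies the matrix part $\vect{\pi}\big(\prod_i(-\mat{C}_i)^{-\theta_i/(\nu_i\alpha_i)-1}\mat{D}_i\big)\vect{e}$ together with the gamma factors $\prod_i\Gamma(1+\theta_i/(\nu_i\alpha_i))$. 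For the stable factors I would use the standard fractional-moment identity $\Exp(S_{\alpha_i}^{p})=\Gamma(1-p/\alpha_i)/\Gamma(1-p)$, valid for $0<p<\alpha_i$, with $p=\theta_i/\nu_i$; the constraint $0<\theta_i/\nu_i<\alpha_i$ is precisely $\nu_i\alpha_i>\theta_i>0$. Multiplying the two contributions and grouping the gamma functions reproduces the stated product $\prod_i \Gamma(1-\theta_i/(\nu_i\alpha_i))\Gamma(1+\theta_i/(\nu_i\alpha_i))/\Gamma(1-\theta_i/\nu_i)$ times the matrix expression.

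The computations are routine; the step I expect to require the most care is the justification of the moment condition and of the factorization itself. One must verify that each $\Exp(S_{\alpha_i}^{\theta_i/\nu_i})$ is finite — which fails exactly when $\theta_i/\nu_i\ge\alpha_i$, the stable law having no moments of order $\ge\alpha_i$ — while the phase-type moments $\Exp(W_i^{\theta_i/(\nu_i\alpha_i)})$ exist automatically, phase-type distributions having all positive fractional moments. A self-contained alternative, bypassing the representation, is to integrate \eqref{eq:MAP} directly: Fubini separates the integral into scalar matrix integrals $\int_0^\infty x^{\theta_i/\nu_i+\alpha_i-1}E_{\alpha_i,\alpha_i}(\mat{C}_i x^{\alpha_i})\dd x$, each evaluated through the functional-calculus/Cauchy representation, reducing to the Mellin transform of the Mittag--Leffler function; there the single delicate point is that the relevant convergence strip is $(0,2)$ rather than $(0,1)$ — the leading asymptotic term of $E_{\alpha_i,\alpha_i}$ vanishing when the two indices coincide — after which the reflection and recursion identities for $\Gamma$ reproduce the same gamma factors.
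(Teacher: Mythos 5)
Your proposal is correct and follows essentially the same route as the paper: the density via the coordinate-wise change of variables, and the moments via the product representation of Theorem \ref{repPHstable}, the independence factorization, the stable fractional-moment identity $\Exp(S_{\alpha}^{p})=\Gamma(1-p/\alpha)/\Gamma(1-p)$, and Lemma \ref{lemma:MAP-structure}. Your remarks on where the constraint $\nu_i\alpha_i>\theta_i>0$ enters (finiteness of the stable moments) and the sketched Mellin-transform alternative go slightly beyond the paper's terse proof but do not change the argument.
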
 
\begin{proof}
The form of the joint density is immediate. Concerning the form of the moments, it suffices to consider the case $n=2$. Using the decomposition \eqref{repPHstable}, we get 
\begin{eqnarray*}
 \Exp (Y_1^{\theta_1}Y_2^{\theta_2})&=& \Exp \left( W_1^{\frac{\theta_1}{\alpha_1 \nu_1}} W_2^{\frac{\theta_2}{\alpha_2 \nu_2}} S_{\alpha_1}^{\frac{\theta_1}{\nu_1}} S_{\alpha_2}^{\frac{\theta_2}{\nu_2}} \right)\\
&=&\Exp \left( W_1^{\frac{\theta_1}{\alpha_1 \nu_1}} W_2^{\frac{\theta_2}{\alpha_2 \nu_2}}  \right)\Exp \left( S_{\alpha_1}^{\frac{\theta_1}{\nu_1}}  \right) \Exp \left(S_{\alpha_2}^{\frac{\theta_2}{\nu_2}} \right) ,
  \end{eqnarray*}
where $(W_1,W_2)$ has a bivariate phase--type distribution with joint density \eqref{eq:dens-map}. Since 
\[ \Exp \left( S_{\alpha_i}^{\frac{\theta_i}{\nu_i}}  \right) = \frac{\Gamma \left( 1 -\frac{\theta_i}{\alpha_i\nu_1} \right)}{\Gamma \left( 1 -\frac{\theta_i}{\nu_1} \right)}, \]
the result then follows from Lemma \ref{lemma:MAP-structure}.
\end{proof}
\begin{example}\normalfont
Consider the case of a bivariate MML distribution,  $\theta_1=\theta_2=1$, $\nu_i\alpha_i>1$ and that $\mat{C}_1$ and $\mat{C}_2$ have the same dimension (the latter can always be achieved by augmenting the smaller one). Using the abbreviation
\[ c_i =  \frac{\Gamma (1-1/(\nu_i \alpha_i))\Gamma(1+1/(\nu_i\alpha_i))}{\Gamma (1-1/\nu_i)}, \ \ i=1,2, \]
we get
\begin{eqnarray*}
\Exp (Y_1)&=&c_1 \vect{\pi} (-\mat{C}_1)^{-1/(\alpha_1\nu_1)-1}\mat{D}_1\vect{e},\\
\Exp (Y_2)&=&c_2 \vect{\pi}(-\mat{C}_1)^{-1} \mat{D}_1 (-\mat{C}_2)^{-1/(\alpha_1\nu_1)-1}\mat{D}_2\vect{e},\\
\Exp (Y_1Y_2)&=&c_1c_2  \vect{\pi} (-\mat{C}_1)^{-1/(\alpha_1\nu_1)-1}\mat{D}_1(-\mat{C}_2)^{-1/(\alpha_2\nu_2)-1}\mat{D}_2 \vect{e}.
\end{eqnarray*}
%from which we may then calculate the covariance. 
If $\nu_i\alpha_i>2$ we can calculate variances and correlation. Indeed, with 
\[ c_i^\prime =  \frac{\Gamma (1-2/(\nu_i \alpha_i))\Gamma(1+2/(\nu_i\alpha_i))}{\Gamma (1-2/\nu_i)}, \ \ i=1,2, \]
one has
\begin{eqnarray*}
\Exp (Y_1^2)&=&c_1^\prime \vect{\pi}(-\mat{C}_1)^{-2/(\alpha_1\nu_1)-1}\mat{D}_1\vect{e}\\
\Exp (Y_2^2)&=&c_2^\prime \vect{\pi}(-\mat{C}_1^{-1}\mat{D}_1)(-\mat{C}_1)^{-2/(\alpha_2\nu_2)-1}\mat{D}_2\vect{e}
\end{eqnarray*}
from which the correlation coefficient is readily calculated.

In Figure \ref{pmml.fig1} we depict a bivariate density from a $\mbox{GMML}^{1/\vect{\nu}}(\vect{\alpha},\vect{\pi},\mat{T},\mat{R})$ distribution along with simulated data. The parameters are given by 
\begin{align*}
\vect{\alpha}=(0.6,0.7),\quad \vect{\beta}=\vect{\nu}\bullet \vect{\alpha}=(3,3),
\end{align*}
and the phase-type component being of the feed-forward structure \eqref{feed-forward_structure1} and \eqref{feed-forward_structure2}, with $n=2$, $\vect{\beta}_1=(1/3,1/3,1/3)$, $\vect{\beta}_2=\vect{0}$,
% \begin{align*}
% \mat{C}_1=\mat{C}_2=\begin{pmatrix}
%  -10 & 0 & 0  \\
% 0 &-1 &0 \\
%  0 & 0& -1/10
%  \end{pmatrix},
%  \end{align*} 
 {
 \[ \mat{C}_1=\mat{C}_2=\begin{pmatrix}
 -10 & 0 & 0  \\
0 &-1 &0 \\
 0 & 0& -1/10
 \end{pmatrix},\ \ \mbox{and} \ \   \mat{D}_1 = -\mat{C}_1 = \begin{pmatrix}
 10 & 0 & 0  \\
0 &1 &0 \\
 0 & 0& 1/10
 \end{pmatrix} .  \]
Hence both marginals are mixtures of power Mittag--Leffler distributions. The mixing probabilities of the two distributions are also the same, $(1/3,1/3,1/3)$, since the diagonal form of $\mat{D}_1$ ensures that the second mixture draws the same component as the first.
The first marginal mixture distribution has a density given by
  \begin{equation}
  f_1(x) =  \frac{5}{3} x^{3} \sum_{i=1}^3  \lambda_i  {E}_{0.6,0.6}(-\lambda_i x^{3}),\label{marginal1}
  \end{equation}
where $\lambda_1=10, \lambda_2=1$ and $\lambda_3=1/10$, while the second marginal density has the form
 \begin{equation}
   f_2(x) =  \frac{10}{7} x^{3} \sum_{i=1}^3  \lambda_i  {E}_{0.7,0.7}(-\lambda_i x^{3}) .
\label{marginal2}
   \end{equation} 
The reward matrix is
\[  \mat{R} = 
\begin{pmatrix}
 1 & 0 \\
 1 & 0 \\
 1 & 0 \\
 0 & 1 \\
 0 & 1 \\
 0 & 1
 \end{pmatrix} \]   
 and $Y_1$ and $Y_2$ simply correspond to the aforementioned mixtures. The structure of $\mat{D}_1$ implies a strong positive correlation. For example, if $Y_1$ is picked from the mixture component with rate $10$, then $Y_2$ will be picked from the same component (but then drawn independently).   
}

 In Figure \ref{pmml.fig2} we use the same parameters, except for 
\begin{align*}
\mat{D}_1=\begin{pmatrix}
 0 & 0 & 10  \\
0 &1 &0 \\
 1/10 & 0& 0
 \end{pmatrix}.
 \end{align*} 
 { Here the correlation between $Y_1$ and $Y_2$ will be negative: if $Y_i$ is drawn from the component with rate $10$, then $Y_j$ will be drawn from a component with rate $0.1$, $i\neq j$. The marginal distributions are again given by \eqref{marginal1} and \eqref{marginal2} since the mixing probabilities are all equal.}
 We observe how the sign of the correlation is affected by the structure of the matrix $\mat{D}_1$, and the fact that the matrices $\mat{C}_i$ are no longer of Erlang structure, the effect is qualitatively opposite to that of the bivariate ML case. One also sees that the class provides quite some flexibility in terms of the shape of the joint density function. 
 
\begin{remark}\normalfont
{ Dependence may often be constructed by introducing certain structures into the intensity matrices like in Example \ref{ex:5.1}. More generally, dependence between several random variables of MPH$^*$ type may be constructed using the so--called Baker copula (\citet{baker}), where order statistics are used and any feasible correlation structure can be obtained.}
\end{remark}

\begin{figure}[hh]
\centering{}
\includegraphics[width=7cm,trim=8cm 11cm 5cm .5cm,clip]{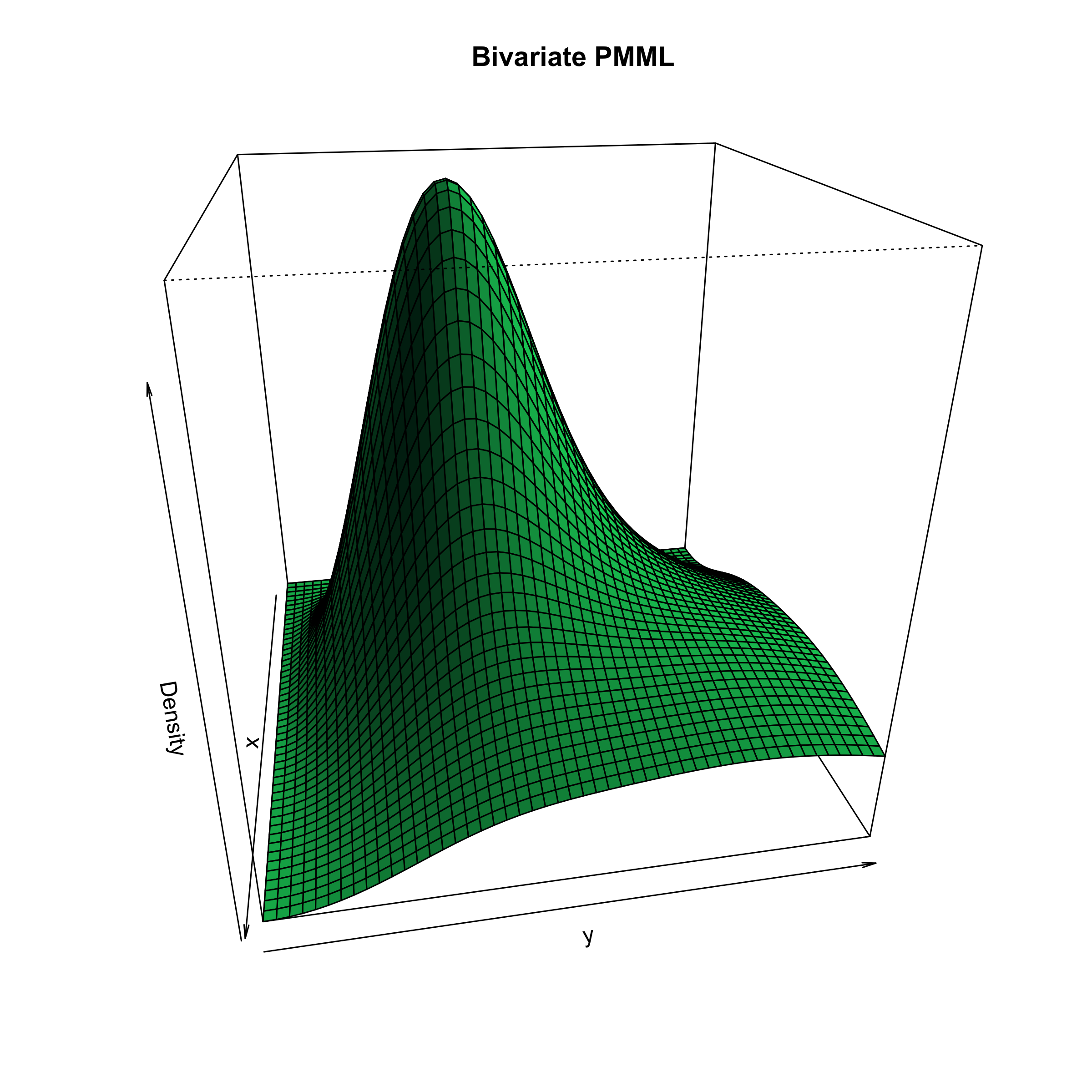}
\includegraphics[width=7cm,trim=.5cm .5cm .5cm .5cm,clip]{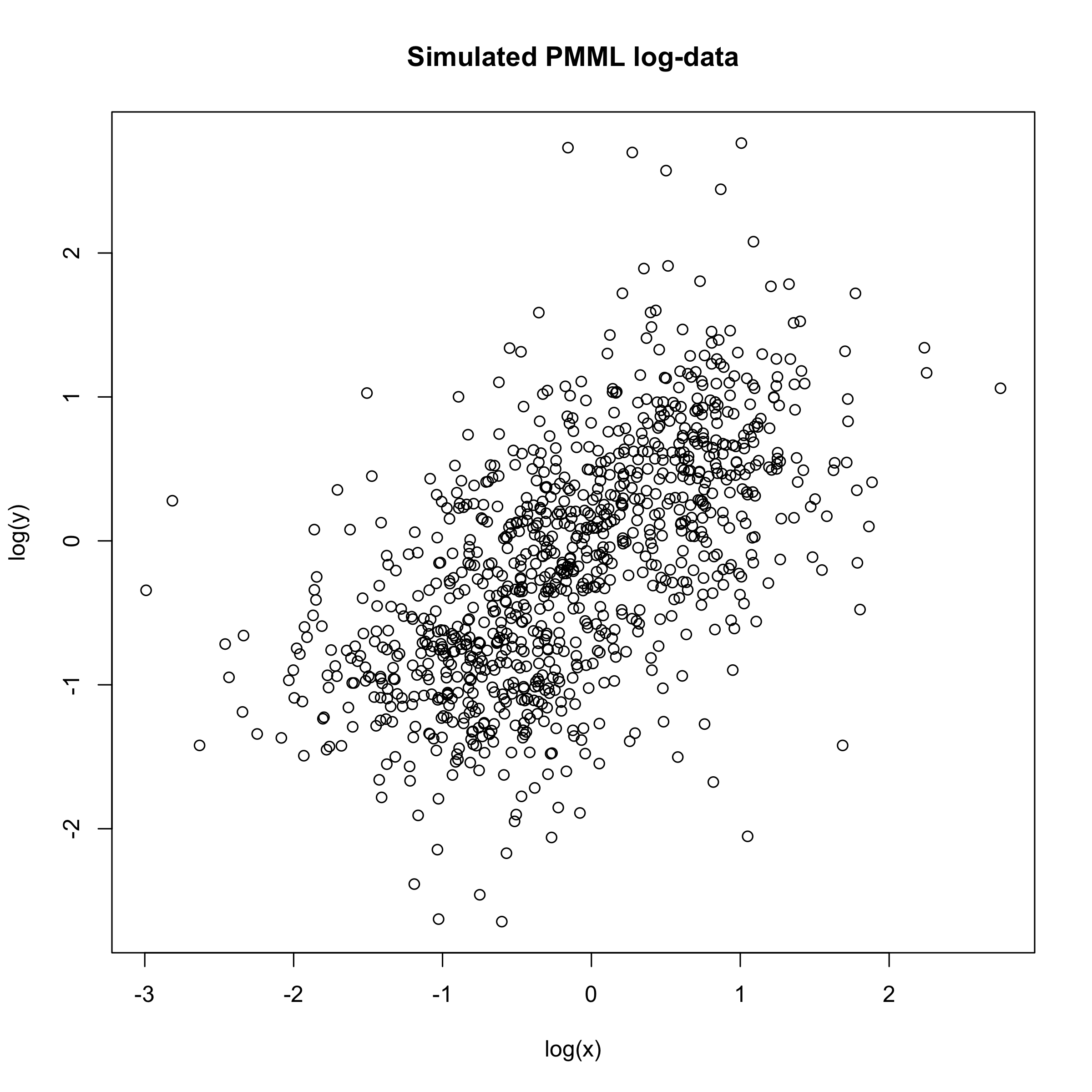}
\caption{Density and $1000$ simulated data-points from a power multivariate GMML distribution with positive correlation (true correlation of $0.35$ and empirical of $0.37$).} 
\label{pmml.fig1}
\end{figure}

\begin{figure}[hh]
\centering
\includegraphics[width=7cm,trim=8cm 11cm 5cm .5cm,clip]{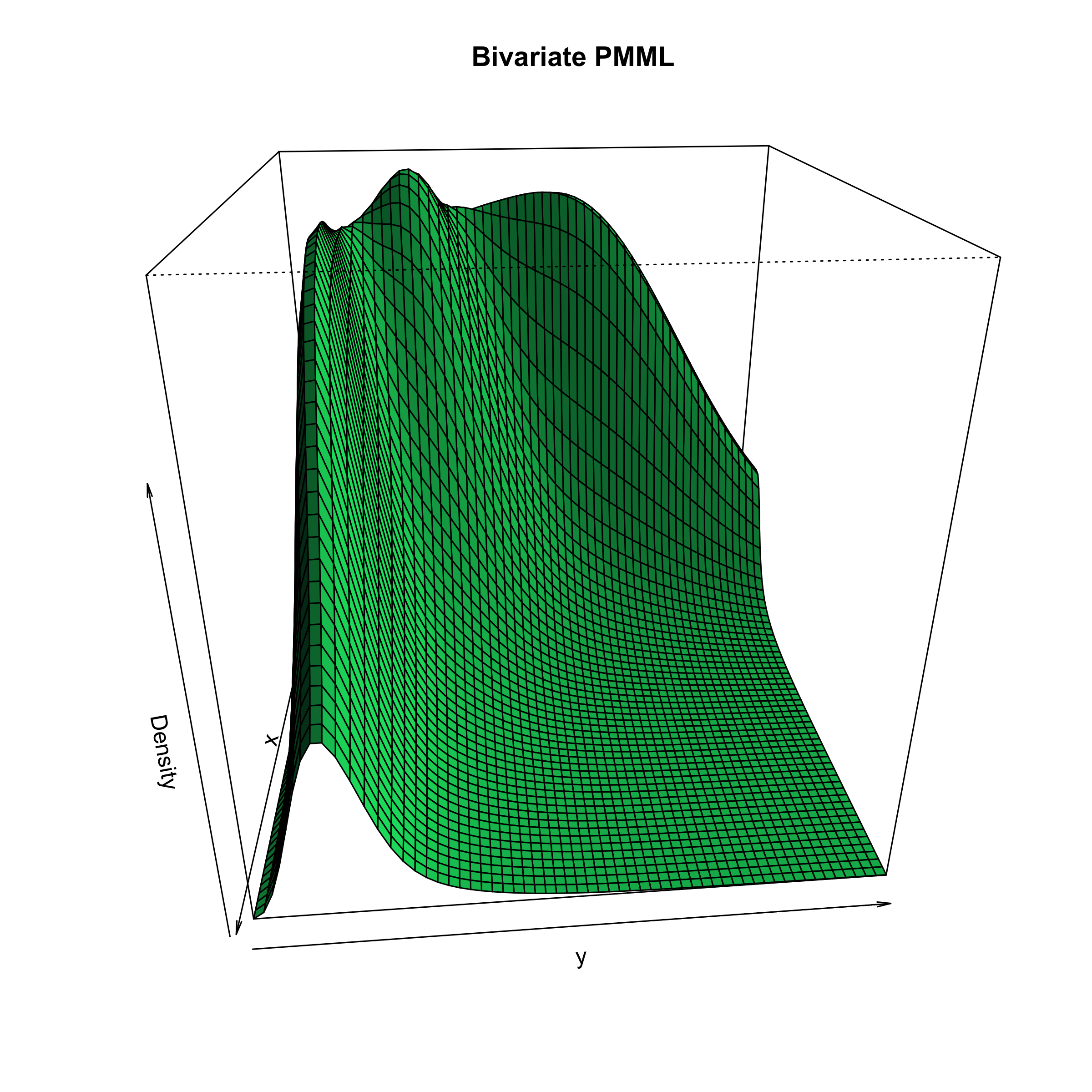}
\includegraphics[width=7cm,trim=.5cm .5cm .5cm .5cm,clip]{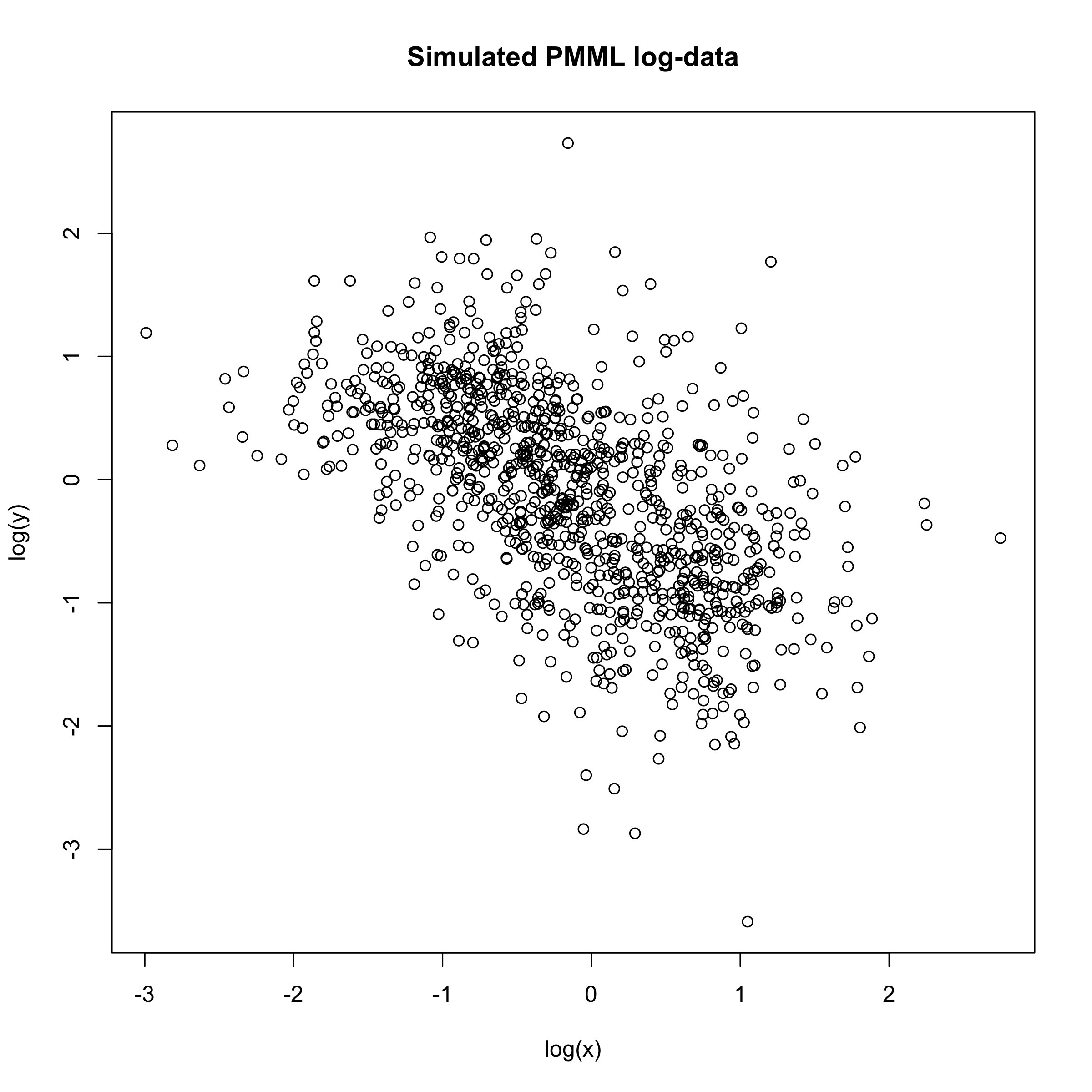}
\caption{Density and $1000$ simulated data-points from a power multivariate GMML distribution with negative correlation (true correlation of $-0.32$ and empirical of $-0.33$).} 
\label{pmml.fig2}
\end{figure}

\end{example}
\section{Conclusion}\label{sec:concl}
This paper introduces a class GMML of multivariate distributions with matrix Mittag-Leffler distributed marginals. With a construction essentially based on the multivariate phase--type distribution, the GMML class remains a flexible and tractable dense class of distributions maintaining a number of closed form properties. Two important sub--classes are considered, which lead to explicit formulas for distributional properties such as densities and fractional moments. 
{This makes it an attractive candidate for the modelling of both theoretical and practical aspects of multivariate heavy-tailed risks, in situations with  tail-independence. The present construction can not be extended to tail-dependent scenarios, so that other approaches will be needed for the latter, which will be an interesting topic for future research. }

  % We also illustrated concretely worked out examples of this new class of multivariate distributions. The approach taken in the analysis can in principle be used for the extension of any multivariate phase-type class to a dense multivariate setup with heavy tails, and it will be interesting in future research to apply this class of distributions for the modelling of multivariate risks in various fields of applications.  

% Carrying over analytic tools from the study of multivariate phase-type distributions to the present situation, this allowed to establish a number of properties for this flexible and tractable class with heavy-tailed marginals.
% We also show that subclasses of this new class are dense (in the sense of weak convergence) in the class of all distributions on the positive orthant, which makes it an attrative candidate for modelling of multivariate heavy-tailed risks.

\bibliography{Multi-MML_R2_b}
\bibliographystyle{apalike}
\setcitestyle{authoryear,open={(},close={)}}

\end{document}